\newtheorem{thm}{Theorem}[section]
\newtheorem{prop}[thm]{Proposition}
\newtheorem{lem}[thm]{Lemma}
\newtheorem{cor}[thm]{Corollary}
\theoremstyle{definition}
\newtheorem{defn}[thm]{Definition}
\newtheorem{ex}[thm]{Example}
\newtheorem{rmk}[thm]{Remark}
\newtheorem{conv}{Convention}
\newtheorem*{nota*}{Notation}
\newcommand{\C}{\mathbb C}
\newcommand{\Z}{\mathbb Z}
\newcommand{\Pp}{\mathbb P}
\newcommand{\sO}{\mathcal O}
\newcommand{\bM}{\overline{\mathcal M}}
\newcommand{\bF}{\overline{\mathcal F}}
\newcommand{\partn}{\mathcal \pi}
\newcommand{\ualpha}{{\underline\alpha}}
\newcommand{\uepsilon}{{\underline\varepsilon}}
\newcommand{\mS}{\mathfrak S}
\newcommand{\cD}{\mathcal D}
\newcommand{\HH}{\mathfrak H}
\newcommand{\fG}{{\mathfrak G}}
\newcommand{\DR}{\text{DR}}
\newcommand{\on}{\operatorname} 
\newcommand{\g}{\mathrm{g}}
\newcommand{\ev}{\operatorname{ev}}
\newcommand{\HE}{\operatorname{HE}}
\newcommand{\rel}{\mathrm{rel}}
\newcommand{\vir}{\mathrm{vir}}
\newcommand{\eq}{\mathrm{eq}}
\newcommand{\GM}{\C^*}
\newcommand{\includegraphicsdpi}[3]{
    \pdfimageresolution=#1  
    \includegraphics[#2]{#3}
    \pdfimageresolution=72  
}
\title[Higher genus relative Gromov--Witten theory and DR-cycles]{Higher genus relative Gromov--Witten theory and double ramification cycles}
\author{Honglu Fan}
\email{honglu.fan@math.ethz.ch}
\author{Longting Wu}
\email{longting.wu@math.ethz.ch}
\author{Fenglong You}
\email{fenglong@ualberta.ca}
\begin{document}

\maketitle

\begin{abstract}
We extend the definition of relative Gromov--Witten invariants with negative contact orders to all genera. Then we show that relative Gromov--Witten theory forms a partial CohFT. Some cycle relations on the moduli space of stable maps are also proved.
\end{abstract}
\tableofcontents

\section{Introduction}

\subsection{Overview}

Gromov--Witten theory is the first modern approach in enumerative geometry. It can be viewed as a theory that virtually counts curves in smooth projective varieties. A lot of structural properties have been developed for Gromov--Witten theory since 1990s. On the other hand, relative Gromov--Witten theory naturally appears when one considers counting curves in a smooth projective variety $X$ with tangency conditions along a divisor $D$. Relative Gromov--Witten invariants are defined by Li--Ruan \cite{LR}, J. Li \cite{Jun1}, \cite{Jun2} and Ionel--Parker \cite{IP2} nearly two decades ago. However, structural properties of relative Gromov--Witten theory still have a lot to develop. 

The structures of genus-zero relative Gromov--Witten theory have been recently developed in \cite{FWY}. The main motivation for \cite{FWY} is the equality between genus-zero relative Gromov--Witten invariants of the smooth pair $(X,D)$ and the orbifold Gromov--Witten invariants of the $r$th root stack $X_{D,r}$ when $r$ is sufficiently large (see \cite{ACW} and \cite{TY}). Such equality indicates a possibility of converting known results of the orbifold theory into the relative theory. However, the results in \cite{ACW} and \cite{TY} only involve orbifold Gromov--Witten invariants with small ages. In \cite{FWY}, we introduced genus-zero relative invariants with negative contact orders and proved that they are equal to orbifold invariants with large ages (up to a suitable power of $r$). Several structural properties of genus-zero relative invariants can be proved using corresponding properties of orbifold invariants. These properties include relative quantum cohomology, topological recursion relation (TRR), WDVV equation, Givental's formalism and genus-zero Virasoro constraints.

This paper is a sequel to \cite{FWY}. We study the structures of higher genus relative Gromov--Witten invariants by introducing higher genus relative invariants with negative contact orders. While higher genus relative invariants and orbifold invariants are not equal in general, the result of \cite{TY} shows that orbifold invariants are polynomials in $r$ for a sufficiently large $r$ and relative invariants are equal to the constant terms of the orbifold invariants. The result in \cite{TY} only contains orbifold invariants with small ages. In this paper, we extend the result of \cite{TY} to orbifold invariants with large ages. The key to our results is the Theorem \ref{thm:appdx} relating the pushforward of DR cycles and Hurwitz--Hodge classes. It has two consequences. Firstly, it can be used to generalize the definition of genus zero relative theory in \cite{FWY} to all genera relative theory which satisfies a partial cohomological field theory (partial CohFT for short, see Section \ref{sec:pcohft} for more details). Secondly, it also generalizes \cite{CJ} to DR cycles with target varieties to produce some relations (see Section \ref{sec:cyc-rel} for more details).

\subsection{Relative Gromov--Witten theory}

We first focus on relative Gromov--Witten theory with possibly negative contact orders in the sense of \cite{FWY}. We quickly review the idea of \cite{FWY}. We presented two definitions of relative Gromov--Witten cycles. In the first definition, we simply defined relative virtual cycles as a limit of orbifold virtual cycles. In the second definition, we first defined a type of bipartite graphs. Given a bipartite graph $\fG$, we associated each vertex with a moduli space, and glued them up using fiber products according to edges, thus forming a stack $\bM_\fG$. It also came with an ``obstruction class" $\iota^*C_\fG\in A^*(\bM_\fG)$ (for $\iota$, see equation \eqref{eqn:diag}). And finally, we defined the relative Gromov--Witten cycle with negative contact as the pushforward of $\iota^*C_\fG\cap[\bM_\fG]^{\vir}$ divided by the number of automorphisms to the corresponding moduli of stable maps.

In \cite{FWY}, we already laid down the set-up of bipartite graphs for all genera. It is natural to expect higher genus graphs lead to higher genus relative Gromov--Witten cycles. However, at that time we still made genus-zero restrictions on the bipartite graphs in the main results because no evidence was found to justify the usefulness of the higher genus generalization. In this paper, we aim to present progress in the higher genus generalization. The main point of this paper can be summarized as follows:
\begin{quote}
    If we simply apply the same definition of relative Gromov--Witten cycles in \cite{FWY} without genus-zero constraints on vertices and admissible bipartite graphs, the resulting all-genus relative theory with negative contact orders would satisfy a partial CohFT in the sense of \cite{LRZ}.
\end{quote}
Ultimately, we look for all structures in the relative Gromov--Witten theory, and hopefully they might help us deal with long-standing questions in this field (for example, how to write down a Virasoro operator for relative theory). As a very subjective remark, the results in this paper perhaps suggest that extending \cite{FWY} to higher genera might be a reasonable idea in the search of broader structures. According to \cite{LRZ}, a partial CohFT is a CohFT without the loop gluing axiom. Finding a replacement of this loop gluing axiom could also be an interesting question, which further relates with how to extend higher genus Givental quantization and how to write down all genus Virasoro operators.

\subsection{Double ramification cycles}

Double ramification cycles on the moduli space of curves are tautological classes and can be defined using relative Gromov--Witten theory of $(\mathbb P^1,0\cup\infty)$. An explicit formula for double ramification cycles is given in \cite{JPPZ}. An introduction to 
tautological classes and double ramification cycles can be found in \cite{P}.

Recently, double ramification cycles with target varieties have been studied in \cite{JPPZ18}. Given a line bundle $L$ over $D$, the double ramification cycle $\DR_\Gamma(D,L)$ with the target variety $D$ is defined using the moduli space $\bM_{\Gamma}^\sim(D)$ of relative stable maps to rubber targets over $D$.
By \cite{JPPZ18}, the double ramification cycle $\DR_\Gamma(D,L)$ is equal to the constant term of the polynomial class $P_{\Gamma}^{g,r}(D,L)$, see Section \ref{sec:cyc-rel} for the definition. 

More generally, the polynomial class $P_{\Gamma}^{d,r}(D,L)$ is a tautological class in the moduli space of stable maps to $D$. When the target variety $D$ is a point, it is conjectured by A. Pixton that the constant term of $P_{\Gamma}^{d,r}(D,L)$ vanishes for $d>g$. When the target variety $D$ is a point, the conjecture is proved in \cite{CJ}. The proof of \cite{CJ} has been generalized to general targets in \cite{Bae19}. 

The formula for double ramification cycles in \cite{JPPZ} and \cite{JPPZ18} are obtained by relating it to certain Hurwitz--Hodge cycles which are also polynomials in $r$ and the constant terms coincide with the constant terms of $P_{\Gamma}^{d,r}(D,L)$. In Section \ref{sec:HH-DR}, we generalize the identity between Hurwitz--Hodge cycles and double ramification cycles. As a direct consequence, Hurwitz--Hodge cycles satisfy a refined polynomiality (see Corollary \ref{cor:cycrel}) which implies the vanishing theorem for $P_{\Gamma}^{d,r}(D,L)$ in \cite{CJ} and \cite{Bae19}. 

\subsection{Acknowledgment}
We would like to thank Rahul Pandharipande and Qile Chen for helpful discussions and valuable comments. H. F. is supported by grant ERC-2012-AdG-320368-MCSK and SwissMAP. L. W. is supported by grant ERC-2017-AdG-786580-MACI. F. Y. is supported by the postdoctoral fellowship of NSERC and Department of Mathematical Sciences at the University of Alberta. 
F.Y. is also supported by a postdoctoral fellowship of the Fields Institute for Research in Mathematical Sciences.

This project has received funding from the European Research Council (ERC) under the European Union’s
Horizon 2020 research and innovation program (grant agreement No. 786580).

\section{Relative and orbifold Gromov--Witten theories}
In this section, we first give a brief review of relative Gromov--Witten theory defined by Li--Ruan \cite{LR}, J. Li \cite{Jun1}, \cite{Jun2} and Ionel--Parker \cite{IP2}. We then also review orbifold Gromov--Witten theory following \cite{AGV02}, \cite{AGV}, \cite{CR} and \cite{Tseng}. For the orbifold theory, we only focus on root stacks and gerbes as target spaces.

\subsection{Relative theory}

Let $X$ be a smooth projective variety and $D$ a smooth divisor. The intersection number of a curve class $\beta$ with a divisor $D$ is denoted by $\int_\beta D$. 

A \emph{topological type} $\Gamma$ is a tuple $(g,n,\beta,\rho,\vec{\mu})$ where $g,n$ are non-negative integers, $\beta\in H_2(X,\Z)$ is a curve class and $\vec{\mu}=(\mu_1,\dotsc,\mu_\rho)\in \Z^\rho$ is a partition of the number $\int_\beta D$. Furthermore, we must have
\begin{equation}\label{eqn:positivecontact}
    \mu_i>0 \text{ for } 1\leq i\leq \rho.
\end{equation}
Let $\bM_\Gamma(X,D)$ be the moduli of relative stable maps. There is a stabilization map $\mathfrak s:\bM_\Gamma(X,D)\rightarrow \bM_{g,n+\rho}(X,\beta)$. Write $\bar\psi_i=\mathfrak s^*\psi_i$. There are evaluation maps
\begin{align*}
\ev_X=(\ev_{X,1},\ldots,\ev_{X,n}):&\bM_\Gamma(X,D)\rightarrow X^n, \\
\ev_D=(\ev_{D,1},\ldots,\ev_{D,\rho}):&\bM_\Gamma(X,D)\rightarrow D^\rho.
\end{align*}

The insertions of relative invariants are the following classes.
\[
\ualpha\in (\C[\bar\psi]\otimes H^*(X))^{n}, \quad \uepsilon\in (\C[\bar\psi]\otimes H^*(D))^{\rho}.
\]
For simplicity, we assume
\[
\ualpha=(\bar\psi^{a_1}\alpha_1,\ldots,\bar\psi^{a_n}\alpha_n), \quad \uepsilon=(\bar\psi^{b_1}\varepsilon_1,\ldots,\bar\psi^{b_\rho}\varepsilon_\rho).
\]
The \emph{relative Gromov--Witten invariant with topological type $\Gamma$} is defined to be
\[
\langle \uepsilon \mid \ualpha \rangle_{\Gamma}^{(X,D)}=\displaystyle\int_{[\bM_{\Gamma}(X,D)]^{\on{vir}}} \ev_D^*\uepsilon \cup \ev_X^*\ualpha,
\]
where
\begin{equation}\label{eqn:ev}
\ev_D^*\uepsilon=\prod\limits_{j=1}^\rho \bar\psi^{b_j}_{D,j}\ev_{D,j}^*\varepsilon_j, \quad \ev_X^*\ualpha=\prod\limits_{i=1}^n \bar\psi^{a_i}_{X,i}\ev_{X,i}^*\alpha_i,
\end{equation}
with $\bar\psi_{D,j}, \bar\psi_{X,i}$ the psi-classes of the corresponding markings.

We also allow disconnected domains. Let $\Gamma=\{\Gamma^\partn\}$ be a set of topological types. The relative invariant with disconnected domain curves is defined by the product rule:
\[
\langle \uepsilon\mid \ualpha \rangle_{\Gamma}^{\bullet(X,D)} = \prod\limits_{\partn} \langle \uepsilon^{\partn} \mid \ualpha^{\partn} \rangle_{\Gamma^{\partn}}^{(X,D)}.
\]
Here $\bullet$ means possibly disconnected domains. We will call this $\Gamma$ a \emph{disconnected topological type}. We now recall the definition of an {\emph {admissible graph}}.

\begin{defn}[Definition 4.6, \cite{Jun1}]\label{defn:adm}
{\emph {An admissible graph}} $\Gamma$ is a graph without edges plus the following data.
\begin{enumerate}
    \item An ordered collection of legs.
    \item An ordered collection of weighted roots.
    \item A function $\g:V(\Gamma)\rightarrow \Z_{\geq 0}$.
    \item A function $b:V(\Gamma)\rightarrow H_2(X,\Z)$.
\end{enumerate}
\end{defn}
Here, $V(\Gamma)$ means the set of vertices of $\Gamma$. Legs and roots are regarded as half-edges of the graph $\Gamma$. 
A relative stable morphism is associated to an admissible graph in the following way. Vertices in $V(\Gamma)$ correspond to the connected components of the domain curve. Roots and legs correspond to relative markings and interior markings, respectively. Weights on roots correspond to contact orders at the corresponding relative markings. 

The functions $\g,b$ assign a component to its genus and degree, respectively. We do not spell out the formal definitions in order to avoid heavy notation, but we refer the readers to \cite{Jun1}*{Definition 4.7}. 

\begin{rmk}
A (disconnected) topological type and an admissible graph are equivalent concepts. Different terminologies emphasize different aspects. For example, admissible graphs will be glued at half-edges into actual graphs.
\end{rmk}


\subsection{Rubber theory}
Relative Gromov--Witten theory is closely related with the so-called rubber theory. Given a smooth projective variety $D$ and a line bundle $L$ on $D$, we denote the \emph{moduli of relative stable maps to rubber targets} by $\bM^{\bullet\sim}_{\Gamma'}(D)$. Here $\bullet$ means possibly disconnected domains, and $\sim$ means rubber targets. The discrete data $\Gamma'$ describing the topology of relative stable maps is defined as a slight variation of an admissible graph.

In the rest of the paper, it is very often that a pair $(X,D)$ is given in the context. In this case, we always assume that
\[
L=N_{D/X}.
\]

\begin{defn}\label{def:rubber}
A rubber (admissible) graph $\Gamma'$ is an admissible graph whose roots have two different types. There are
\begin{enumerate}
    \item $0$-roots (whose weights will be denoted by $\mu^0_1,\ldots,\mu^0_{\rho_0}$), and
    \item $\infty$-roots (whose weights will be denoted by $\mu^\infty_1,\ldots,\mu^\infty_{\rho_\infty}$).
\end{enumerate}
Furthermore, the curve class assignment $b$ maps $V(\Gamma)$ to $H_2(D,\Z)$.
\end{defn}

As to the moduli space of relative stable maps to a rubber (non-rigid) target, a description can be found in, for example, \cite{GV}*{Section 2.4}. Briefly speaking, a relative stable map to a rubber target of $D$ is a relative pre-stable map to a chain of $\Pp_D(L\oplus \sO)$ glued along certain invariant sections. We denote the invariant divisors at two ends of the chain by $D_0, D_\infty$. We make the convention that the normal bundles of $D_0$ and $D_\infty$ are $L$ and $L^\vee$, respectively. 

We also have evaluation maps
\[
\ev_D:\bM^{\bullet\sim}_{\Gamma'}(D)\rightarrow D^n,\quad  \ev_{D_0}:\bM^{\bullet\sim}_{\Gamma'}(D)\rightarrow D^{\rho_0},\quad \ev_{D_\infty}:\bM^{\bullet\sim}_{\Gamma'}(D)\rightarrow D^{\rho_\infty}.
\]

To get a non-empty moduli space, we need the following condition.
\begin{equation}\label{eqn:rubberint}
    \sum\limits_{i=1}^{\rho_0} \mu^0_i - \sum\limits_{j=1}^{\rho_\infty} \mu^\infty_j = \int_\beta c_1(L),
\end{equation}
where $\beta$ is the curve class of $\Gamma'$. If $\Gamma'$ has more than one vertex, the above is satisfied on each of the vertex (with $\mu^0_i,\mu^\infty_j$ corresponding to weights of roots on a given vertex).


A relative stable map to a rubber target is associated to a rubber graph in the standard way, with relative markings at $D_0$ and $D_\infty$ corresponding to the $0$-roots and $\infty$-roots, respectively.


\subsection{Orbifold theory of root stacks and gerbes}
We consider the $r$th root stack $X_{D,r}$ of $X$ along the divisor $D$. We write the coarse moduli space of the inertia stack of $X_{D,r}$ as $\underline{I}(X_{D,r})$. It has $r$ components:
\[
\underline{I}(X_{D,r})  \cong X\sqcup D\sqcup D\cdots\sqcup D.
\]
The twisted sectors which are isomorphic to $D$ are labeled by the ages $k_i/r$, where $k_i\in\{1,2,\ldots,r-1\}$.

Since our task is to compare orbifold theory with relative theory, we would like to match some of their notation. Let $\Gamma=(g,n,\beta,\rho,\vec{\mu})$ be a topological type with $\vec{\mu}=(\mu_1,\dotsc,\mu_\rho)\in (\Z^*)^\rho$ being a partition of the number $\int_\beta D$. A topological type $\Gamma$ can also be used to specify topological types of orbifold stable maps to $X_{D,r}$ under the following convention. 
\begin{conv}\label{conv:Gamma}
A topological type $\Gamma=(g,n,\beta,\rho,\vec{\mu})$ of orbifold stable maps contains the following data:
\begin{itemize}
    \item $g,\beta$ corresponds to the genus and curve class;
    \item $n$ indicates a set of $n$ markings without orbifold structure;
    \item $\rho$ indicates a set of $\rho$ markings with orbifold structure;
    \item $\vec{\mu}=(\mu_1,\dotsc,\mu_\rho)\in (\Z^*)^\rho$ and $\sum_{i=1}^{\rho}\mu_i=\int_d D$;
    \item When $\mu_i>0$, the evaluation map of the corresponding marking lands on the twisted sector with age $\mu_i/r$;
    \item when $\mu_i<0$, the evaluation map of the corresponding marking lands on the twisted sector with age $(r+\mu_i)/r$.
\end{itemize} 
Here, we require that $r>\max_{1\leq i\leq \rho}|\mu_i|$.
\end{conv}

A priori, evaluation maps should land on $\underline{I}(X_{D,r})$. Since ages are fixed by $\vec{\mu}$, we further restrict their targets to the corresponding components. We have the restricted evaluation maps
\[
\ev_X:\bM_\Gamma(X_{D,r})\rightarrow X^n, \quad \ev_D:\bM_\Gamma(X_{D,r})\rightarrow D^\rho
\]
corresponding to those $n$ markings without orbifold structures, and those $\rho$ markings with orbifold structures, respectively. We similarly denote their entries by
\[
\ev_X = (\ev_{X,1},\ldots,\ev_{X,n}), \quad \ev_D = (\ev_{D,1},\ldots,\ev_{D,\rho}).
\]

Consider the forgetful map
\[
\tau: \bM_\Gamma(X_{D,r})\rightarrow \bM_{g,n+\rho}(X,\beta)\times_{X^\rho} D^{\rho},
\]
we write $\bar{\psi_i}=\tau^*\psi_i$.
Using the notation in the Definition \ref{rel-inv-neg}, the \emph{orbifold Gromov--Witten invariant with topological type $\Gamma$} is

\begin{equation}\label{orb-inv}
    \langle \uepsilon, \ualpha \rangle_{\Gamma}^{X_{D,r}}=\displaystyle\int_{[\bM_{\Gamma}(X_{D,r})]^{\vir}} \prod\limits_{j=1}^\rho \bar\psi_{D,j}^{b_j}\ev_{D,j}^*\varepsilon_j\prod\limits_{i=1}^n \bar\psi_{X,i}^{a_i}\ev_{X,i}^*\alpha_i,
\end{equation}
where $\bar\psi_{D,j}, \bar\psi_{X,i}$ are psi-classes corresponding to markings evaluated under $\ev_D, \ev_X$.

For orbifold Gromov--Witten invariants (\ref{orb-inv}) with topological type $\Gamma$, we define an integer $\rho_-\in \mathbb Z_{\geq 0}$ to be
\begin{align}\label{rho-neg}
\rho_-=\sum_{\mu_i>0}\mu_i/r+\sum_{\mu_i<0}(r+\mu_i)/r - \left( \int_\beta D \right)/r.
\end{align}
When $r$ is sufficiently large, virtual dimension being an integer implies that $\rho_-$ equals the number of negative parts in $\vec{\mu}$. 

\section{Relative theory with negative contact orders in all genera}

\subsection{Relative theory as a limit of orbifold theory}
Following \cite{FWY}, we define relative Gromov--Witten theory as a limit of orbifold Gromov--Witten theory. However, contrary to the genus-zero invariants, higher genus orbifold invariants of the root stack $X_{D,r}$ depend on $r$. According to \cite{TY}, orbifold invariants of root stacks are polynomials in $r$ and the constant terms are the corresponding relative invariants when $r$ is sufficiently large. Combining the ideas in \cite{TY} and \cite{FWY}, we give our first definition of relative Gromov--Witten invariants (possibly with negative contact orders) as the lowest term of orbifold invariants for a sufficiently large $r$.

\begin{thm}\label{thm:limitexist}
Fix a topological type $\Gamma=(g,n,\beta,\rho,\vec{\mu})$. For a sufficiently large $r$, the following cycle class
\[
r^{\rho_-}\tau_*\left([\bM_\Gamma(X_{D,r})]^{\on{vir}}\right)\in A_*(\bM_{g,n+\rho}(X,\beta) \times_{X^\rho} D^{\rho})
\]
is a polynomial in $r$, where $\tau$ is the forgetful map
\[
\tau: \bM_\Gamma(X_{D,r})\rightarrow \bM_{g,n+\rho}(X,\beta) \times_{X^\rho} D^{\rho}.
\]
\end{thm}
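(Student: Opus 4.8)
The plan is to reduce the statement to the polynomiality result of \cite{TY} by a degeneration/virtual-localization argument. First I would recall that \cite{TY} establishes polynomiality in $r$ for orbifold invariants of $X_{D,r}$ in the case where all contact orders $\mu_i$ are positive, i.e.\ all orbifold markings have small age $\mu_i/r$. The content here is to upgrade this to arbitrary $\vec\mu \in (\Z^*)^\rho$ (allowing negative $\mu_i$, hence large ages $(r+\mu_i)/r$) and to do so at the level of the pushed-forward virtual cycle $r^{\rho_-}\tau_*[\bM_\Gamma(X_{D,r})]^{\vir}$ in the Chow group of $\bM_{g,n+\rho}(X,\beta)\times_{X^\rho} D^\rho$, not merely for numerical invariants. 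Since both $\bM_\Gamma(X_{D,r})$ and the target of $\tau$ fiber over the moduli of curves and over $X^n\times D^\rho$, and since cohomology/Chow of these spaces is finitely generated, capping against a finite spanning set of tautological classes reduces the cycle statement to a finite collection of numerical polynomiality statements; so it is harmless to argue numerically.

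The key steps, in order: (1) For a marking with $\mu_i<0$ (large age), use the rigidification/degeneration of the root stack target: deform $X_{D,r}$ to the normal cone of $D$, so that $X_{D,r}$ degenerates into $X_{D,r}$ glued to a root-stack $\Pp^1$-bundle $\Pp_D(N\oplus\sO)_{D_0,r}$ over $D$, and apply the degeneration formula. This expresses $[\bM_\Gamma(X_{D,r})]^{\vir}$ in terms of products of orbifold cycles on $X_{D,r}$ with \emph{all small ages} on the $X$-side and rubber/orbifold cycles on the root-stack $\Pp^1$-bundle side, summed over splittings of $\vec\mu$ and intermediate contact profiles. (2) For the $X$-side, \cite{TY} directly gives polynomiality in $r$. (3) For the $\Pp^1$-bundle side, use $\C^*$-virtual localization on the $\Pp^1$-bundle: the fixed loci are Hurwitz--Hodge-type orbifold moduli over $D_0$ and $D_\infty$, and the integrals become hypergeometric/Hodge-integral expressions whose $r$-dependence can be read off explicitly, giving polynomiality there as well (this is essentially the mechanism already used in \cite{JPPZ18} and \cite{TY} for the small-age case, extended to large ages). (4) Track the prefactor: each large-age marking contributes a factor of $1/r$ from the orbifold structure of the evaluation map (gerbe over $D$), precisely compensated by the $r^{\rho_-}$ normalization, where $\rho_-$ counts the negative parts of $\vec\mu$ as in \eqref{rho-neg}; combining the polynomial $r$-dependence of all the pieces with this normalization yields a polynomial in $r$. (5) Finally, reassemble: the finitely many numerical polynomiality statements, together with finite generation of the Chow groups involved, upgrade to polynomiality of the pushed-forward cycle class itself.

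The main obstacle I anticipate is step (3)--(4): controlling the $r$-dependence of the rubber/root-stack $\Pp^1$-bundle contributions with large ages and verifying that the powers of $r$ appearing there are bounded below by $-\rho_-$, so that $r^{\rho_-}\tau_*$ is genuinely polynomial (and not merely Laurent) in $r$. This amounts to a careful bookkeeping of age shifts versus orbifold normalization factors on the fixed loci of the localization, analogous to — but more delicate than — the small-age analysis in \cite{TY}; the large-age case is exactly where the ``extra'' twisted sectors with age $(r+\mu_i)/r$ enter, and one must check that the Hurwitz--Hodge classes arising (these are the ones appearing in Theorem~\ref{thm:appdx}) contribute $r$-polynomially after the normalization. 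A secondary technical point is ensuring that ``sufficiently large $r$'' can be chosen uniformly across all the splittings produced by the degeneration formula, which holds because there are only finitely many such splittings for fixed $\Gamma$.
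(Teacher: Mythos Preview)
Your overall architecture---degenerate $X_{D,r}$ to the normal cone of $D$, then virtually localize on the $\Pp^1$-bundle piece---is exactly the paper's. Two corrections are needed, however. In step~(1)--(2), the degeneration produces $X$ relative to $D$ glued to $P_{D_0,r}$ along $D_\infty$, \emph{not} $X_{D,r}$ glued to a bundle; the $X$-side contribution is therefore $[\bM^\bullet_{\Gamma_1}(X,D)]^{\vir}$, a relative cycle with no $r$-dependence at all, so invoking \cite{TY} there is unnecessary. Likewise, your preamble reducing the cycle statement to numerical invariants by ``capping against a spanning set'' is neither needed nor obviously valid in Chow; the paper (and you should) work at the cycle level throughout.

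The substantive gap is in step~(3)--(4), which you rightly flag as the obstacle but describe as ``careful bookkeeping.'' It is more than that. After localization on $\bM^\bullet_{\Gamma_2}(P_{D_0,r},D_\infty)$, a stable vertex $v_i$ over $\cD_0$ contributes, after pushforward, terms of the form $\hat c_j\,r^{\g(i)-j}$ with
\[
\hat c_j \;=\; r^{2j-2\g(i)+1}\,\tau_*\bigl(c_j(-R^*\pi_*\mathcal L_r)\cap[\bM_{\Gamma^0_i}(\cD_0)]^{\vir}\bigr).
\]
The polynomiality from \cite{JPPZ18} says only that $\hat c_j$ is a polynomial in $r$; to conclude that $\hat c_j\,r^{\g(i)-j}$ is polynomial (rather than merely Laurent) when $j>\g(i)$ one needs the \emph{refined} polynomiality of Corollary~\ref{cor:cycrel}. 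That corollary is a genuine consequence of Theorem~\ref{thm:appdx}, whose proof requires upgrading the relative--orbifold cycle comparison to an equivariant statement (Lemma~\ref{lem:appeq}) via a family version over an arbitrary $\C^*$-torsor base (Lemma~\ref{lem:appfamily}), and then matching localization residues. Your proposal gestures at Theorem~\ref{thm:appdx} but does not supply this input; without it the bound $r^{-\rho_-}$ on the lowest power of $r$ cannot be established from \cite{JPPZ18} and \cite{TY} alone.
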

Theorem \ref{thm:limitexist} will be proven in Section \ref{sec:match}. For a sufficiently large $r$, denote the constant term of the above cycle by \[\mathop{\text{lim}}_{r\rightarrow \infty} \left[r^{\rho_-}\tau_*([\bM_\Gamma(X_{D,r})]^{\vir})\right]_{r^0}.\]

\begin{defn}\label{def:cycle}
Fix a topological type $\Gamma=(g,n,\beta,\rho,\vec{\mu})$. The \emph{relative Gromov--Witten cycle} of the topological type $\Gamma$ is defined as
\[
\mathfrak c_\Gamma(X/D) = \mathop{\text{lim}}_{r\rightarrow \infty}\left[r^{\rho_-}\tau_*([\bM_\Gamma(X_{D,r})]^{\vir})\right]_{r^0} \in A_*(\bM_{g,n+\rho}(X,\beta) \times_{X^\rho} D^{\rho}).
\]
\end{defn}

When $\rho_-=0$, $\mathfrak c_\Gamma(X/D)$ coincides with the pushforward of virtual cycle from the corresponding moduli of relative stable maps.

Recall that $\rho$ is used to denote the number of roots in the admissible graph $\Gamma$. Let $\rho_+$ be the number of roots whose weights are positive. We have
\begin{prop}\label{prop:vdim}
\[\mathfrak c_\Gamma(X/D) \in A_{d}(\bM_{g,n+\rho}(X,\beta) \times_{X^\rho} D^{\rho}),\] where 
\[d=(1-g)(\mathrm{dim}_{\mathbb C}(X)-3)+\int_{\beta} c_1(T_X(-\mathrm{log} D)) + n + \rho_+. \]
\end{prop}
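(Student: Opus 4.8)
The plan is to compute the dimension $d$ by tracking how virtual dimensions behave under the limit procedure of Definition~\ref{def:cycle}, reducing everything to the known virtual dimension of the orbifold moduli space $\bM_\Gamma(X_{D,r})$. First I would recall that for the $r$th root stack, the virtual dimension of $\bM_\Gamma(X_{D,r})$ is
\[
\on{vdim}\bM_\Gamma(X_{D,r}) = (1-g)(\dim_\C X - 3) + \int_\beta c_1(T_{X_{D,r}}) + n + \rho - \sum_{i=1}^\rho \on{age}_i,
\]
where the age shifts come from the orbifold Riemann--Roch (Toen) contribution at the $\rho$ orbifold markings. Here $c_1(T_{X_{D,r}}) = c_1(T_X(-\log D)) + \tfrac{1}{r}[D]$ (pulled back appropriately), so $\int_\beta c_1(T_{X_{D,r}}) = \int_\beta c_1(T_X(-\log D)) + \tfrac{1}{r}\int_\beta D$. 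The age at the $i$th marking is $\mu_i/r$ when $\mu_i>0$ and $(r+\mu_i)/r$ when $\mu_i<0$. Summing, $\sum_i \on{age}_i = \tfrac{1}{r}\sum_{\mu_i>0}\mu_i + \sum_{\mu_i<0} 1 + \tfrac{1}{r}\sum_{\mu_i<0}\mu_i = \tfrac{1}{r}\int_\beta D + \rho_-$, using \eqref{rho-neg} and that $\rho_-$ equals the number of negative parts for $r$ large. The $\tfrac{1}{r}\int_\beta D$ terms cancel, leaving $\on{vdim}\bM_\Gamma(X_{D,r}) = (1-g)(\dim_\C X - 3) + \int_\beta c_1(T_X(-\log D)) + n + \rho - \rho_- = d$, since $\rho - \rho_- = \rho_+$.

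Next I would account for the forgetful map $\tau\colon \bM_\Gamma(X_{D,r}) \to \bM_{g,n+\rho}(X,\beta)\times_{X^\rho} D^\rho$. Since $\tau$ is proper, $\tau_*$ sends $A_k$ to $A_k$, so $\tau_*([\bM_\Gamma(X_{D,r})]^{\vir})$ lives in $A_{\on{vdim}}$ of the target; multiplying by the scalar $r^{\rho_-}$ does not change the dimension, and taking the constant term in $r$ of a polynomial-valued cycle class (Theorem~\ref{thm:limitexist}) preserves the homological degree, since the polynomiality is componentwise in a fixed finite-dimensional Chow group. Therefore $\mathfrak c_\Gamma(X/D) \in A_d$ with $d$ as above. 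One caveat to check: $\tau$ as written targets a fiber product over $X^\rho$ (matching evaluation at relative markings into $X$ via the inclusion $D \hookrightarrow X$), and one must confirm the virtual dimension count is insensitive to whether we record the relative evaluations in $D^\rho$ or $X^\rho$ — indeed it is, because the age/Riemann--Roch bookkeeping above already incorporates that the orbifold evaluation maps land on twisted sectors isomorphic to $D$, not $X$.

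The main obstacle I anticipate is not the arithmetic but making the orbifold Riemann--Roch computation of $\on{vdim}\bM_\Gamma(X_{D,r})$ fully rigorous with the correct age conventions from Convention~\ref{conv:Gamma} — in particular the sign and the ``$r+\mu_i$'' shift for negative contact, and ensuring the obstruction theory used to define $[\bM_\Gamma(X_{D,r})]^{\vir}$ is the standard absolute one for maps to the Deligne--Mumford stack $X_{D,r}$ so that Tseng's orbifold GW virtual dimension formula applies verbatim. Once that is in place, the cancellation of the $\tfrac1r$-terms and the identity $\rho-\rho_- = \rho_+$ finish the proof immediately; the limit and pushforward steps are formal. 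An alternative, perhaps cleaner, route would be to invoke \cite{TY} directly: when $\rho_-=0$ the cycle is literally the pushforward of the relative virtual class, whose dimension is the classical relative GW virtual dimension $(1-g)(\dim_\C X -3) + \int_\beta c_1(T_X(-\log D)) + n + \rho$, and then observe that each negative root decreases $\rho_+$ by one relative to $\rho$ while Definition~\ref{def:cycle}'s limit construction is dimension-preserving; but the orbifold Riemann--Roch computation is the most self-contained and I would present that.
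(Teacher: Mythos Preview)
The paper does not actually prove Proposition~\ref{prop:vdim}; it is stated without proof. Your argument fills this gap correctly: the orbifold virtual dimension computation is right (the key identity $c_1(T_{X_{D,r}}) = \pi^*c_1(T_X(-\log D)) + \tfrac1r\pi^*[D]$ follows from $T_{X_{D,r}}(-\log\mathcal D) \cong \pi^*T_X(-\log D)$ together with $[\mathcal D]=\tfrac1r\pi^*[D]$), the age sum $\sum_i \on{age}_i = \tfrac1r\int_\beta D + \rho_-$ is exactly equation~\eqref{rho-neg}, and the $\tfrac1r\int_\beta D$ terms cancel as you say. Proper pushforward and the $r^0$-extraction preserve Chow degree, so $\mathfrak c_\Gamma(X/D)\in A_d$ with $d=(1-g)(\dim_\C X-3)+\int_\beta c_1(T_X(-\log D))+n+\rho_+$.

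Your ``caveat'' paragraph is slightly off-target: there is no dimension discrepancy to worry about in the fiber product $\bM_{g,n+\rho}(X,\beta)\times_{X^\rho}D^\rho$, since pushforward records only the homological degree of the cycle itself, not the dimension of the ambient space. The alternative route you sketch (via the classical relative virtual dimension when $\rho_-=0$) would also work and is essentially a special case of the same Riemann--Roch bookkeeping, so presenting the orbifold computation directly, as you do, is the cleanest choice.
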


We define relative Gromov--Witten invariants (possibly with negative contact orders) by integrations against this cycle. 
The insertions are
\begin{align*}
    \begin{split}
        \ualpha = (\bar\psi^{a_1}\alpha_1,\ldots,\bar\psi^{a_n}\alpha_n) 
        &\in (\C[\bar\psi] \otimes H^*(X))^{n},\\ \uepsilon = (\bar\psi^{b_1}\epsilon_1,\ldots,\bar\psi^{b_{\rho}}\epsilon_\rho) &\in (\C[\bar\psi] \otimes H^*(D))^{\rho}.
    \end{split}
\end{align*}
There are two types of evaluation maps from $\bM_\Gamma(X_{D,r})$ corresponding to interior markings and relative markings respectively:
\begin{align*}
\ev_X=(\ev_{X,1},\ldots,\ev_{X,n}):\bM_\Gamma(X_{D,r})&\rightarrow X^n, \\
\ev_D=(\ev_{D,1},\ldots,\ev_{D,\rho}):\bM_\Gamma(X_{D,r})&\rightarrow D^\rho.
\end{align*}
Recall that we have the stabilization map $\tau:\bM_\Gamma(X_{D,r})\rightarrow \bM_{g,n+\rho}(X,\beta) \times_{X^\rho} D^{\rho}$. There are also evaluation maps
\begin{align*}
\overline{\ev}_X=(\overline\ev_{X,1},\ldots,\overline\ev_{X,n}):\bM_{g,n+\rho}(X,\beta) \times_{X^\rho} D^{\rho}&\rightarrow X^n, \\
\overline{\ev}_D=(\overline\ev_{D,1},\ldots,\overline\ev_{D,\rho}):\bM_{g,n+\rho}(X,\beta) \times_{X^\rho} D^{\rho}&\rightarrow X^\rho,
\end{align*}
such that 
\[
\overline{\ev}_X\circ \tau=\ev_X, \quad \overline{\ev}_D\circ\tau=\ev_D.
\]
\begin{defn}\label{rel-inv-neg1}
The {\it relative Gromov--Witten invariant of topological type $\Gamma$ with insertions $\uepsilon,\ualpha$} is
\[
\langle \uepsilon \mid \ualpha \rangle_{\Gamma}^{(X,D)} =  \displaystyle\int_{\mathfrak c_\Gamma(X/D)} \prod\limits_{j=1}^{\rho} \bar{\psi}_{D,j}^{b_j}(\overline{\ev}_{D,j})^*\epsilon_j\prod\limits_{i=1}^n \bar{\psi}_{X,i}^{a_i}\overline{\ev}_{X,i}^*\alpha_i,
\]
where $\bar\psi_{D,j}, \bar\psi_{X,i}$ are pull-backs of psi-classes on $\bM_{g,n+\rho}(X,\beta)$ corresponding to relative and interior markings.
\end{defn}

As a direct consequence, we have the following relation between relative and orbifold invariants.

\begin{thm}\label{thm:limit}
Fix a topological type $\Gamma=(g,n,\beta,\rho,\vec{\mu})$. For $r\gg 1$, the following equality holds:
\begin{equation}
    \left[r^{\rho_-}\langle \uepsilon, \ualpha \rangle_{\Gamma}^{Y_{D,r}}\right]_{r^0}=\langle\uepsilon  \mid \ualpha \rangle_{\Gamma}^{(Y,D)},
\end{equation}
where the relative invariant on the right hand side of the above equation is the relative invariant with negative contact order defined in Definition \ref{rel-inv-neg1} when $\rho_->0$.
\end{thm}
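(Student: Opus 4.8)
The plan is to reduce the statement to the definitions already in place: the orbifold invariant $\langle \uepsilon, \ualpha \rangle_{\Gamma}^{Y_{D,r}}$ from \eqref{orb-inv}, the relative cycle $\mathfrak c_\Gamma(Y/D)$ from Definition \ref{def:cycle}, and the relative invariant of Definition \ref{rel-inv-neg1}. First I would write the orbifold invariant in \eqref{orb-inv} as an integral over the pushforward cycle $\tau_*([\bM_\Gamma(Y_{D,r})]^{\vir})$ on $\bM_{g,n+\rho}(Y,\beta)\times_{Y^\rho} D^\rho$. The key point is that all the insertions appearing in \eqref{orb-inv} --- the psi-classes $\bar\psi_{D,j}$, $\bar\psi_{X,i}$ and the evaluation pullbacks $\ev_{D,j}^*\varepsilon_j$, $\ev_{X,i}^*\alpha_i$ --- are by construction pulled back along $\tau$ from the classes $\bar\psi_{D,j}$, $\bar\psi_{X,i}$, $\overline{\ev}_{D,j}^*\epsilon_j$, $\overline{\ev}_{X,i}^*\alpha_i$ on $\bM_{g,n+\rho}(Y,\beta)\times_{Y^\rho} D^\rho$, using the compatibilities $\overline{\ev}_X\circ\tau=\ev_X$ and $\overline{\ev}_D\circ\tau=\ev_D$ recorded just before Definition \ref{rel-inv-neg1} and the definition $\bar\psi_i=\tau^*\psi_i$. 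Hence, by the projection formula applied to $\tau$,
\[
\langle \uepsilon, \ualpha \rangle_{\Gamma}^{Y_{D,r}}
= \int_{\tau_*([\bM_\Gamma(Y_{D,r})]^{\vir})} \prod_{j=1}^{\rho} \bar{\psi}_{D,j}^{b_j}\overline{\ev}_{D,j}^*\epsilon_j \prod_{i=1}^n \bar{\psi}_{X,i}^{a_i}\overline{\ev}_{X,i}^*\alpha_i.
\]

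Next I would multiply both sides by $r^{\rho_-}$ and invoke Theorem \ref{thm:limitexist}: for $r\gg 1$ the cycle $r^{\rho_-}\tau_*([\bM_\Gamma(Y_{D,r})]^{\vir})$ is a polynomial in $r$ with coefficients in $A_*(\bM_{g,n+\rho}(Y,\beta)\times_{Y^\rho} D^\rho)$. Since the cohomology classes we are capping against are fixed and independent of $r$, integration against this polynomial cycle produces a genuine polynomial in $r$ with complex coefficients, and the operation ``take the $r^0$-coefficient'' commutes with capping against a fixed class and with proper pushforward to a point. Therefore
\[
\left[r^{\rho_-}\langle \uepsilon, \ualpha \rangle_{\Gamma}^{Y_{D,r}}\right]_{r^0}
= \int_{\left[r^{\rho_-}\tau_*([\bM_\Gamma(Y_{D,r})]^{\vir})\right]_{r^0}} \prod_{j=1}^{\rho} \bar{\psi}_{D,j}^{b_j}\overline{\ev}_{D,j}^*\epsilon_j \prod_{i=1}^n \bar{\psi}_{X,i}^{a_i}\overline{\ev}_{X,i}^*\alpha_i.
\]
By Definition \ref{def:cycle} the cycle appearing here is exactly $\mathfrak c_\Gamma(Y/D)$, and by Definition \ref{rel-inv-neg1} the right-hand side is exactly $\langle \uepsilon \mid \ualpha \rangle_\Gamma^{(Y,D)}$. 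This completes the argument.

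The only genuine subtlety --- and the step I expect to be the main obstacle to state cleanly rather than to prove --- is making sure the psi-classes are truly the same on both sides: in \eqref{orb-inv} the $\bar\psi$'s on $\bM_\Gamma(Y_{D,r})$ are defined as $\tau^*\psi_i$ for the forgetful map $\tau:\bM_\Gamma(Y_{D,r})\to \bM_{g,n+\rho}(Y,\beta)\times_{Y^\rho} D^\rho$, while in Definition \ref{rel-inv-neg1} the $\bar\psi$'s are the psi-classes on $\bM_{g,n+\rho}(Y,\beta)$ pulled back to the fiber product; one must check these match under the projection from $\bM_{g,n+\rho}(Y,\beta)\times_{Y^\rho} D^\rho$ to $\bM_{g,n+\rho}(Y,\beta)$, which is immediate from the factorization of the stabilization map but should be spelled out. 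Once that bookkeeping is in place, the theorem is a formal consequence of the projection formula together with Theorem \ref{thm:limitexist} and is, as stated in the excerpt, a ``direct consequence'' of the definitions.
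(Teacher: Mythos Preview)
Your proposal is correct and matches the paper's approach: the paper states Theorem \ref{thm:limit} as a ``direct consequence'' of Definition \ref{def:cycle} and Definition \ref{rel-inv-neg1} without further argument, and what you have written is exactly the projection-formula unpacking of that remark, together with the observation that taking the $r^0$-coefficient commutes with integrating against $r$-independent insertions. The psi-class bookkeeping you flag is indeed a tautology from the factorization of $\tau$ through the fiber product, so there is no real obstacle.
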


\subsection{Graph notations}
Following \cite{FWY}, the second definition of the relative Gromov--Witten cycle is given by a graph sum. In this section, we review the graph notation. In \cite{FWY}*{Section 4}, we defined certain bipartite graphs and built moduli spaces based on those graphs. In higher genera, we in fact use the same notion of bipartite graphs except that graphs may now have loops. Readers familiar with \cite{FWY}*{Section 4} should keep in mind that we no longer have the genus constraint on admissible bipartite graphs and could skip to later sections. But to make a relatively self-contained treatment, here we try to make a concise (and hopefully more understandable) summary of graph notation in \cite{FWY}*{Section 4}. 

\subsubsection{Two sides of bipartite graph}
The main type of bipartite graphs is called \emph{admissible bipartite graph}. A bipartite graph divides vertices into two sides, with edges connecting back and forth from one side to the other (but not between vertices on the same side). In this paper, we label the two sides differently as \emph{$0$-side} and \emph{$\infty$-side}.

\subsubsection{Vertices on $0$-side}
Vertices on the two sides have different decorations. We treat a single vertex on $0$-side along with its decorations as a graph by itself (denoted by $\Gamma^0_i$ with $i$ in a label set). It is called a \emph{graph of type $0$}. Recall the rubber admissible graph is defined in Definition \ref{def:rubber}. A graph of type $0$ is in fact a rubber admissible graph, with some refinements on the types of roots. Formally, we have the following definition.
\begin{defn}\label{def:admgraph0}
\emph{A (connected) graph of type $0$} is a weighted graph $\Gamma^0$ consisting of a
single vertex, no edges, and the following four types of half-edges:
\begin{enumerate}
\item {$0$-roots},
\item {$\infty$-roots of node type},
\item {$\infty$-roots of marking type},
\item {Legs}.
\end{enumerate}
$0$-roots are weighted by
positive integers, and $\infty$-roots are weighted by negative integers. The vertex is associated to a tuple $(g,\beta)$ where $g\geq 0$
and $\beta\in H_2(D,\Z)$. 
\end{defn}

\subsubsection{Vertices on $\infty$-side}
On the other hand, we combine all vertices on $\infty$-side into one single (possibly disconnected) graph $\Gamma^\infty$. It forms an admissible graph (see Definition \ref{defn:adm}) where roots are further distinguished by 
\begin{enumerate}
    \item node type,
    \item marking type.
\end{enumerate}

\subsubsection{Admissible bipartite graph}
An admissible bipartite graph $\mS$ is defined as a tuple $(\mS_0,\Gamma^\infty,I,E,\g,b)$. We provide a more descriptive explanation than \cite{FWY}*{Definition 4.8} in the following.

\begin{enumerate}
    \item $\mS_0=\{\Gamma_i^0\}$ is a set of graphs of type $0$. $\Gamma^\infty$ is a (possibly disconnected) graph of type $\infty$.
    
    \item \emph{(Edges)} $E$ is the set of edges each of which identifies an $\infty$-root of node type in a vertex of $\mS_0$, and a root of node type in a vertex of $\{\Gamma^\infty\}$.
    
    \item \emph{(Marking labeling)} $I$ is a one-one correspondence between the set $\{1,\ldots, n+\rho\}$ and the set of all half-edges in $\mS_0 \sqcup \{\Gamma^\infty\}$ except $0$-roots of node type (in all $\Gamma^0_i$) and roots of note type(in $\Gamma^\infty$). For the convenience of further treatment, we assume:
    \begin{quote}
        the set of all legs corresponds to the subset $\{1,\ldots,n\}$.
    \end{quote}
    
    \item \emph{(Genus assignment)} $\g$ is a map from the set of vertices to $\Z_{\geq 0}$. The value of $\g$ at a vertex is exactly the genus decoration on the vertex in the original rubber graph or admissible graph.
    
    \item \emph{(Degree assignment)} $b$ is a map sending the set of vertices in $\mS_0$ to $H_2(D,\Z)$, and the set of vertices in $\Gamma^\infty$ to $H_2(X,\Z)$. The value of $b$ at a vertex is exactly the curve class decoration on the vertex in the original rubber graph or admissible graph.
\end{enumerate}
In addition, $\mS$ satisfies the following.
\begin{enumerate}
     \item For a graph $\Gamma^0_i$, the sum of weights of all roots equals $\int_{b(\Gamma^0_i)} D$. For a vertex in $\Gamma^\infty$, the sum of weights of all roots also equals the intersection of curve class with $D$.
    \item For every edge connecting an $\infty$-root of node type $l$ in a vertex $\Gamma^0_i$ and a root of node type $l'$ in a vertex of $\{\Gamma^\infty\}$, the weights of $l$ and $l'$ add up to $0$.
    \item All the vertices are \emph{stable}. We call a vertex $v$ \emph{stable} if either $b(v)\neq 0$ or the number of half-edges associated with $v$ is bigger than $2-2g(v)$.
\end{enumerate}

Given an admissible bipartite graph $\fG$, we sometimes talk about its topological type as a whole. The topological type of $\fG$ is a tuple $(g,n,\beta,\rho,\vec{\mu})$ where 
\begin{itemize}
    \item $g$ is the sum of genera (values of $\g$) of all vertices plus $h^1(\fG)$ (as a $1$-dimensional CW-complex),
    \item $\beta\in H_2(X,\Z)$ is the sum of curve classes (values of $b$) of all vertices (curve classes of $\Gamma^0_i$ in $H_2(D,\Z)$ are pushed forward to $H_2(X,\Z)$),
    \item $n$ is the number of legs,
    \item $\rho$ is the total number of $0$-roots, $\infty$-roots of marking type in $\mS_0$ and roots of marking type in $\Gamma^{\infty}$,
    \item and $\vec{\mu}$ is the list of weights of $0$-roots, $\infty$-roots of marking type in $\mS_0$ and roots of marking type in $\Gamma^{\infty}$.
\end{itemize}  

\begin{ex}
An example of an admissible bipartite graph is the following:

\includegraphicsdpi{600}{}{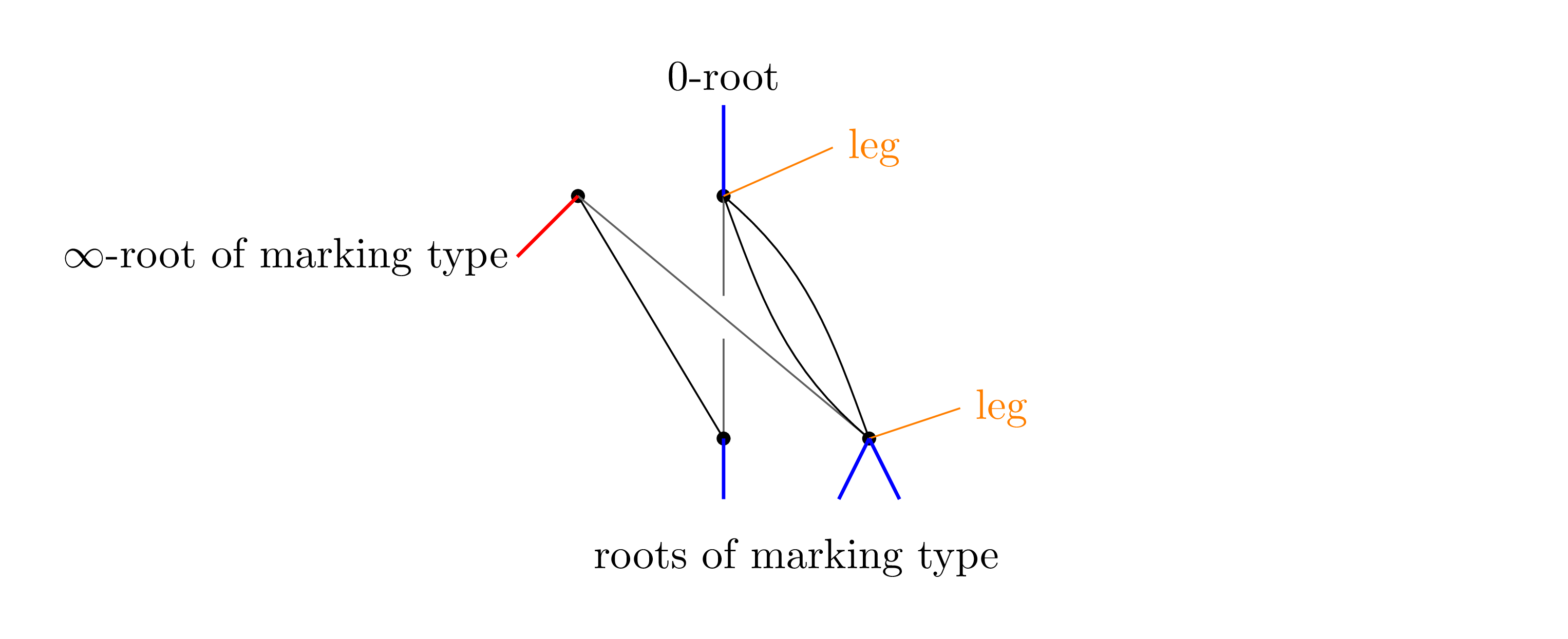}

Vertices on the top are on the $0$-side, and vertices on the bottom are on the $\infty$-side. Roots of marking type merge into edges. Moreover, $0$-side vertices can have $0$-roots, legs and $\infty$-roots of marking type, while $\infty$-side vertices can have legs and roots of marking type. We omit decorations of genus and curve class on the picture.
\end{ex}

\subsubsection{A graph of type $0$ corresponds to a moduli of rubber theory.}
Given a graph $\Gamma^0$ of type $0$, we associate a moduli space of relative stable maps with it. Recall that in the relative theory with rubber (non-rigid) targets over $D$, the target expands as a chain of $P_D(L\oplus \sO)$ glued along suitable sections. Two distinguished sections are denoted by $D_0$ and $D_\infty$. Our convention is that the normal bundle of $D_0$ is $L$.
\begin{defn}\label{def:typ0}
Define $\bM^{\sim}_{\Gamma^0}(D)$ to be the moduli stack of genus $g$, degree $\beta$ relative stable maps to rubber targets over $D$. Each marking corresponds to a half-edge of $\Gamma^0$ with the following assignments of contact orders: A $0$-root of weight $i$ corresponds to a relative marking over $D_0$ with contact order $i$. An $\infty$-root (of either type) of weight $i$ corresponds to a relative marking over $D_\infty$ of contact order $-i$ (recall $i$ is negative). A leg corresponds to an interior marking.
\end{defn}
\subsubsection{A graph of type $0$ corresponds to a moduli of orbifold theory.}
For technical purposes, we would also associate a moduli of orbifold stable maps to $\Gamma^0$ at some point.
\begin{defn}
Define $\bM_{\Gamma^0}(\cD_0)$ to be the moduli stack of genus $g$, degree $\beta$ orbifold stable maps to $\cD_0$ whose markings correspond to half-edges of $\Gamma^0$ with the following assignments of ages:
A $0$-root of weight
$i$ corresponds to a marking of age $i/r$. An $\infty$-root (of either type) of weight $i$ corresponds to a
marking of age $(r+i)/r$ (recall $i$ is negative in this case). A leg corresponds to a marking of age $0$ (that is, without orbifold structure).
\end{defn}

\subsubsection{A graph of type $\infty$ corresponds to a moduli of relative theory.}
Given a graph of type $\infty$, it naturally corresponds to a moduli of relative stable maps because the graph is an admissible graph to begin with.
\begin{defn}
Define $\bM^{\bullet}_{\Gamma^\infty}(X,D)$ to be the moduli of relative stable maps with topological type $\Gamma^\infty$. In $\Gamma^\infty$, roots of node type and roots of marking type are all treated as roots in this definition.
\end{defn}

\subsection{Relative theory as a graph sum}\label{sec:rel-inv}
We provide the second definition of the relative Gromov--Witten cycle of $(X,D)$ in this section. We would like to point out that the definition has in fact \emph{almost the same} appearance as the one in \cite{FWY}*{Section 5}. But the reader should keep in mind that the definition now allows higher genera in moduli, as well as loops in the bipartite graph. This is slightly more than a simple-minded extension, as the whole theory now satisfies a partial CohFT (see Section \ref{sec:pcohft}).

Let $X$ be a smooth projective variety and $D$ be a smooth divisor. Let the topological type $\Gamma=(g,n,\beta,\rho,\vec{\mu})$ with $\vec{\mu}=(\mu_1,\dotsc,\mu_\rho)\in (\Z^*)^\rho$ satisfying 
\[
\sum\limits_{i=1}^\rho \mu_i = \int_\beta D. 
\]
Similar to \cite{FWY}*{Section 5.1}, given a bipartite graph $\fG\in \mathcal B_\Gamma$, we consider the fiber product
\begin{equation}\label{eqn:fiberprod}
\bM_{\fG} = \prod\limits_{\Gamma^0_i\in \mS_0}\bM_{\Gamma^0_i}^\sim(D) \times_{D^{|E|}} \bM^{\bullet}_{\Gamma^\infty}(X,D),
\end{equation}
where the fiber product identifies evaluation maps according to edges $E$. That is, we have the following diagram (\cite{FWY}*{Diagram (5.2)}):
\begin{equation}\label{eqn:diag}
\xymatrix{
\bM_{\fG} \ar[r]^{} \ar[d]^{\iota} & D^{|E|} \ar[d]^{\Delta} \\
\prod\limits_{\Gamma^0_i\in \mS_0}\bM_{\Gamma^0_i}^\sim(D) \times \bM^{\bullet}_{\Gamma^\infty}(X,D) \ar[r]^{} & D^{|E|}\times D^{|E|}.
}
\end{equation}
The virtual fundamental class is
\[
[\bM_{\fG}]^{\on{vir}}=\Delta^!\left[\prod\limits_{\Gamma^0_i\in \mS_0}\bM_{\Gamma^0_i}^\sim(D)\times \bM^{\bullet}_{\Gamma^\infty}(X,D)\right]^{\vir}.
\]
There is a map
\[
\mathfrak t_{\fG}:\bM_{\fG}\rightarrow \bM_{g,n+\rho}(X,\beta) \times_{X^\rho} D^{\rho}
\]
obtained by the composition of the stabilization map and the boundary map which is described in \cite{FWY}*{Section 5.1}.

Following \cite{FWY}, to define relative Gromov--Witten cycle, we need to consider the following cycle classes

\begin{enumerate}
   \item For the graph $\Gamma^\infty$, we consider the class
    \begin{align}\label{neg-rel-infty}
C_{\Gamma^\infty}(t)=\dfrac{t}{t+\Psi}\in A^*(\bM^\bullet_{\Gamma^\infty}(X,D))[t^{-1}].
\end{align}
    \item For each graph $\Gamma^0_i$, we consider the class
    \begin{align}\label{neg-rel-0}
C_{\Gamma_i^0}(t)= \dfrac{\sum_{l\geq \g(i)}c(l-\g(i)) t^{\g(i)+\rho_\infty(i)-1-l}}{\prod\limits_{e\in \HE_{n}(\Gamma_i^0)} \big(\frac{t+\ev_e^*D}{d_e}-\bar\psi_e\big) } \in A^*(\bM_{\Gamma_i^0}^\sim(D))[t,t^{-1}].
\end{align}
The notation is explained as follows
\begin{itemize}
    \item the class
$c(l)=\Psi_\infty^l-\Psi_\infty^{l-1}\sigma_1+\ldots+(-1)^l\sigma_l$;
\item $\Psi_\infty$ is the divisor corresponding to the cotangent line bundle determined by the relative divisor on $\infty$ side;
\item $\sigma_k=\sum\limits_{\{e_1,\ldots,e_k\}\subset \HE_{m,n}(\Gamma_i^0)} \prod\limits_{j=1}^{k} (d_{e_j}\bar\psi_{e_j}-\ev_{e_j}^*D)$;
\item $\rho_\infty(i)$ is the number of $\infty$-roots (of both types) associated to $\Gamma_i^0$.
\end{itemize}
\end{enumerate}
    
For each $\fG$, we write
\begin{equation}\label{eqn:cg}
C_{\fG}=\left[ p_{\Gamma^\infty}^*C_{\Gamma^\infty}(t)\prod\limits_{\Gamma_i^0\in \mS_0} p_{\Gamma_i^0}^*C_{\Gamma_i^0}(t) \right]_{t^{0}}, 
\end{equation}
where 
$[\cdot]_{t^{0}}$ means taking 
the constant term in $t$, and $p_{\Gamma^\infty}, p_{\Gamma_i^0}$ are projections from $\prod\limits_{\Gamma^0_i\in \mS_0}\bM_{\Gamma^0_i}^\sim(D) \times \bM^{\bullet}_{\Gamma^\infty}(X,D)$ to corresponding factors.

\begin{defn}\label{def-rel-cycle}
Define \emph{the relative Gromov--Witten cycle of the pair $(X,D)$ of topological type $\Gamma$} to be 
\[\mathfrak c_\Gamma(X/D) = \sum\limits_{\fG \in \mathcal B_\Gamma} \dfrac{1}{|Aut(\fG)|}(\mathfrak t_{\fG})_* (\iota^* C_{\fG} \cap [\bM_{\fG}]^{\vir}) \in A_*(\bM_{g,n+\rho}(X,\beta) \times_{X^\rho} D^{\rho}),\]
where $\iota$ is the vertical arrow in diagram \eqref{eqn:diag}.
\end{defn}

We define relative Gromov--Witten invariants (possibly with negative contact orders) by integrations against this cycle. 

Let 
\begin{align*}
    \begin{split}
        \ualpha = (\bar\psi^{a_1}\alpha_1,\ldots,\bar\psi^{a_n}\alpha_n) 
        &\in (\C[\bar\psi] \otimes H^*(X))^{n},\\ \uepsilon = (\bar\psi^{b_1}\epsilon_1,\ldots,\bar\psi^{b_{\rho}}\epsilon_\rho) &\in (\C[\bar\psi] \otimes H^*(D))^{\rho}.
    \end{split}
\end{align*}
There are evaluation maps from $\bM_\fG$ corresponding to interior markings and relative markings
\begin{align*}
\ev_X=(\ev_{X,1},\ldots,\ev_{X,n}):\bM_\fG&\rightarrow X^n, \\
\ev_D=(\ev_{D,1},\ldots,\ev_{D,\rho}):\bM_\fG&\rightarrow D^\rho.
\end{align*}
Recall we have the stabilization map $t_\fG:\bM_\fG\rightarrow \bM_{g,n+\rho}(X,\beta) \times_{X^\rho} D^{\rho}$. There are also evaluation maps
\begin{align*}
\overline{\ev}_X=(\overline\ev_{X,1},\ldots,\overline\ev_{X,n}):\bM_{g,n+\rho}(X,\beta) \times_{X^\rho} D^{\rho}&\rightarrow X^n, \\
\overline{\ev}_D=(\overline\ev_{D,1},\ldots,\overline\ev_{D,\rho}):\bM_{g,n+\rho}(X,\beta) \times_{X^\rho} D^{\rho}&\rightarrow X^\rho,
\end{align*}
such that 
\[
\overline{\ev}_X\circ t_\fG=\ev_X, \quad \overline{\ev}_D\circ t_\fG=\ev_D.
\]
\begin{defn}\label{rel-inv-neg}
The {\it relative Gromov--Witten invariant of topological type $\Gamma$ with insertions $\uepsilon,\ualpha$} is
\[
\langle \uepsilon \mid \ualpha \rangle_{\Gamma}^{(X,D)} =  \displaystyle\int_{\mathfrak c_\Gamma(X/D)} \prod\limits_{j=1}^{\rho} \bar{\psi}_{D,j}^{b_j}(\overline{\ev}_{D,j})^*\epsilon_j\prod\limits_{i=1}^n \bar{\psi}_{X,i}^{a_i}\overline{\ev}_{X,i}^*\alpha_i,
\]
where $\bar\psi_{D,j}, \bar\psi_{X,i}$ are pull-backs of psi-classes on $\bM_{g,n+\rho}(X,\beta)$ corresponding to relative and interior markings.
\end{defn}

\subsection{Two definitions coincide}\label{sec:match}

Recall that $\rho_-$, defined in equation (\ref{rho-neg}), is the number of large ages when $r$ is sufficiently large. In this section, we show that orbifold invariants of the root stack $X_{D,r}$ are Laurent polynomials in $r$ for $r$ sufficiently large. Furthermore, the lower bound for the degree of the polynomial is $-\rho_-$ and the coefficient of the lowest degree term of $r$ is exactly the relative invariants with $\rho_-$ negative relative markings (see Definition \ref{rel-inv-neg}). This is a generalization of the results of \cites{ACW, TY, TY18, FWY}. 

\begin{thm}\label{thm:rel-orb}
Consider the forgetful map
\[
\tau: \bM_\Gamma(X_{D,r})\rightarrow \bM_{g,n+\rho}(X,\beta) \times_{X^\rho} D^{\rho},
\] then
\[
r^{\rho_-}\tau_*\left([\bM_\Gamma(X_{D,r})]^{\on{vir}}\right)
\]
is a polynomial in $r$ for a sufficiently large $r$. 
Furthermore, we have the following relation for the cycle classes
\[
\mathop{\mathrm{lim}}_{r\rightarrow \infty} \left[r^{\rho_-}\tau_*\left([\bM_\Gamma(X_{D,r})]^{\on{vir}}\right)\right]_{r^0} = \sum\limits_{\fG \in \mathcal B_\Gamma} \dfrac{1}{|Aut(\fG)|}(\mathfrak t_{\fG})_* \left({\iota}^* C_{\fG} \cap [\bM_{\fG}]^{\on{vir}}\right).
\]
In other words, the two definitions of the relative Gromov--Witten cycles coincide.
\end{thm}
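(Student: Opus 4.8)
The plan is to reduce Theorem \ref{thm:rel-orb} to the degeneration and localization analysis already carried out in genus zero in \cite{FWY}, together with the polynomiality input of \cite{TY}, and to check that nothing in that argument actually used the genus-zero restriction beyond cosmetic bookkeeping. First I would replace the root stack $X_{D,r}$ by its rigidification / degeneration to the normal cone: degenerate $(X,D)$ (or rather the root stack thereof) so that the virtual class $[\bM_\Gamma(X_{D,r})]^{\vir}$ decomposes, via the degeneration formula for orbifold Gromov--Witten theory, into a sum over splittings of $\Gamma$ into a piece mapping to $X$ (relatively to $D$) and a piece mapping to the root gerbe over the projectivized normal bundle $\Pp_D(N_{D/X}\oplus\sO)$. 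The orbifold markings of positive age stay on the $X$-side and become ordinary relative markings; the markings of large age (the $\rho_-$ of them) get pushed to the bubble, and it is precisely on the bubble component that the nontrivial $r$-dependence lives. This is the same bookkeeping as \cite{FWY}*{Section 5}, now applied without the genus constraint, so the combinatorics of the resulting graph sum is exactly the set $\mathcal B_\Gamma$ of admissible bipartite graphs (with loops allowed).

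Next I would analyze the bubble contributions by rigidifying the target $\Pp_D(N\oplus\sO)$ and localizing with respect to the fiberwise $\GM$-action, landing on the rubber moduli spaces $\bM^\sim_{\Gamma^0_i}(D)$. Here the key computation is the identification of the fixed-point contribution as a Laurent polynomial in $r$ whose coefficients are tautological classes on the rubber spaces; this is exactly where Theorem \ref{thm:appdx} (the relation between pushforwards of DR cycles and Hurwitz--Hodge classes) enters, replacing the Hurwitz--Hodge integrals coming from the root gerbe with DR-type classes. Tracking the $r$-powers through the age shifts $(r+\mu_i)/r$ gives the factor $r^{\rho_-}$ needed to clear denominators, proving polynomiality and the lower bound $-\rho_-$ on the degree; extracting the $r^0$-coefficient of each localization term then produces precisely the classes $C_{\Gamma^0_i}(t)$ in \eqref{neg-rel-0} (the shape $\frac{\sum c(l-\g) t^{\cdots}}{\prod(\frac{t+\ev^*D}{d_e}-\bar\psi_e)}$ is literally the localization residue) and $C_{\Gamma^\infty}(t)=\frac{t}{t+\Psi}$ in \eqref{neg-rel-infty} on the main component, with the $[\cdot]_{t^0}$ in \eqref{eqn:cg} being the residue/constant-term operation from the localization sum. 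Gluing the edge evaluation maps reproduces the fiber product \eqref{eqn:fiberprod} and the refined Gysin pullback $\Delta^!$, and the automorphism factor $1/|\mathrm{Aut}(\fG)|$ is the usual overcounting correction in the degeneration/localization graph sum.

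The main obstacle — and the place where this genuinely goes beyond \cite{FWY} — is controlling the loops in $\fG$, i.e. the higher-genus contributions of the rubber spaces. In genus zero the bipartite graphs are forests, so the fiber product \eqref{eqn:fiberprod} is a sequence of transverse-enough intersections and the virtual class behaves well; with loops one must make sure that (i) the degeneration formula and the $\GM$-localization still produce a \emph{finite} sum of graphs with the stated normal-bundle contributions, (ii) the polynomiality in $r$ survives the extra Hodge-class factors $c(l-\g(i))$ coming from the higher-genus rubber (this is the role of $\Psi_\infty$ and the $\sigma_k$ in \eqref{neg-rel-0}, and is exactly where Theorem \ref{thm:appdx} is indispensable, since a naive Hurwitz--Hodge computation would not obviously be polynomial), and (iii) the two operations ``take $r^0$-coefficient'' and ``push forward along $\mathfrak t_\fG$'' commute, which is routine once polynomiality is known term-by-term. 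I would therefore structure the proof as: (1) orbifold degeneration formula $\Rightarrow$ graph sum over $\mathcal B_\Gamma$; (2) $\GM$-localization on each bubble $\Rightarrow$ rubber integrals with explicit $r$-dependent weights; (3) invoke Theorem \ref{thm:appdx} to rewrite those as DR/tautological classes and read off polynomiality plus the $r^0$-coefficient; (4) match the resulting expression with Definition \ref{def-rel-cycle} term by term. Step (3) is the crux; steps (1), (2), (4) are adaptations of \cite{FWY} that I expect to be lengthy but not conceptually new.
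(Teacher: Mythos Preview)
Your proposal is correct and follows essentially the same route as the paper: degeneration to the normal cone, then $\GM$-localization on the bubble $P_{D_0,r}$, then invoking Theorem~\ref{thm:appdx} (via its Corollaries~\ref{cor:cycrel} and~\ref{cor:HHI}) to obtain both the refined polynomiality of $\hat c_j r^{\g(i)-j}$ and the identification of the $r^0$-term with the rubber classes $C_{\Gamma^0_i}(t)$. One small correction to your bookkeeping: in the degeneration all orbifold markings (small and large age alike) go to the $P_{D_0,r}$ side, not just the large-age ones; the distribution of positive contact orders between $\Gamma^0$ and $\Gamma^\infty$ happens only afterward in the localization graph, but this does not affect the structure of your argument.
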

\begin{proof}
Similar to the proof of \cite{FWY}*{Theorem 6.1}, we prove Theorem \ref{thm:rel-orb} as follows. 

The first step is to consider the degeneration of $X_{D,r}$ to the normal cone and apply the degeneration formula to the cycle $[\bM_\Gamma(X_{D,r})]^{\vir}$. The degeneration formula is simply the higher genus version of the degeneration formula in \cite{FWY}*{(6.3)}. Hence, $[\bM_{\Gamma}(X_{D,r})]^{\vir}$ can be written in terms of suitable pushforward of $[\bM^{\bullet}_{\Gamma_1}(X,D)]^{\vir}$ and $[\bM^{\bullet}_{\Gamma_2}(P_{D_0,r},D_\infty)]^{\vir}$, where $P_{D_0,r}$ is the $r$th root stack of $P:=\Pp_D(L\oplus \sO)$ along $D_0$.

The second step is to compute $[\bM_{\Gamma}^{\bullet}(P_{D_0,r}, D_\infty)]^{\on{vir}}$ via virtual localization.
The virtual localization formula is as follows:

\begin{align}
&[\bM_{\Gamma}^{\bullet}(P_{D_0,r}, D_\infty)]^{\on{vir}}=\\
\notag & \qquad \sum_{\fG}\frac{1}{|\on{Aut}(\fG)|} \cdot\iota_*\left( \frac{[\bF_{\fG}]^{\on{vir}}}{e(\on{Norm}^{\on{vir}})}\right),
\end{align}
where $\bF_{\fG}$ is a fix locus indexed by the localization bipartite graph $\fG$ (see \cite{FWY}, Definition 4.10), the inverse of the virtual normal bundle $\frac{1}{e(\on{Norm}^{\on{vir}})}$ is a product of the following factors:

\begin{itemize}
\item for each stable vertex $v_i$ of $\Gamma^0_i$ over the zero section, we write $E_i$ for the set of edges associated to the vertex $v_i$ and write $\rho_{-}(i)$ for the number of $\infty$-roots of marking type (corresponding to large age markings) associated to $v_i$. The localization contribution is 
\begin{align}
\notag C_0(v_i)=&\left(\prod_{e\in E_i}\frac{rd_e}{t+\on{ev}_e^*c_1(L)-d_e\bar{\psi}_{e}}\right)\times\\
&\left(\sum_{j=0}^{\infty}(t/r)^{\g(i)-1+|E_i|-j+\rho_-(i)}c_j(-R^*\pi_*\mathcal L_r)\right),
\end{align}
where 
\[
\pi: \mathcal C_{\Gamma^0_i}(\cD_0)\rightarrow \bM_{\Gamma^0_i}(\cD_0)
\]
 is the universal curve, 
\[
f: \bM_{\Gamma^0_i}(\cD_0)\rightarrow \cD_0
\]
is the universal map and $\mathcal L_r=f^*L_r$ is the pullback of the universal $r$th root $L_r$ over the gerbe $\cD_0$.
The virtual rank of $-R^*\pi_*\mathcal L_r\in K^0(\bM_{\Gamma^0_i}(\cD_0))$ is $\g(i)+|E_i|+\rho_{-}(i)-1$.
As for each unstable vertex $v_i$, we have $C_0(v_i)=1$.

\item if the target expands over the infinity section, there is
a factor
\begin{align}
C_\infty=\frac{\prod_{e\in E(\fG)}d_e}{-t-\Psi_0}.
\end{align}
\end{itemize}

We consider the pushforward 
\[\tau:\bM_{\Gamma}^{\bullet}(P_{D_0,r}, D_\infty)\rightarrow  \bM_{\Gamma'}^\bullet(D)\]
where $\Gamma'$ is a topological type obtained from $\Gamma$. By slightly abuse of notation, we also use $\tau$ to denote the restriction of $\tau$ to each fixed locus.

To show that 
\[
r^{\rho_-}\tau_*\left([\bM_{\Gamma}^{\bullet}(P_{D_0,r}, D_\infty)]^{\on{vir}}\right)
\]
is a polynomial in $r$. We only need to show that for each $v_i$, 
\begin{equation}\label{contri-0}
\tau_*\left(r^{\rho_-(i)}C_0(v_i)\cap [\bM_{\Gamma^0_i}(\cD_0)]^{\on{vir}}\right)
\end{equation}
is a polynomial in $r$. We may rewrite \eqref{contri-0} as 
\[\prod_{e\in E_i}\frac{d_e}{t+(\on{ev}_e^*c_1(L)-d_e\psi_{e})}\left(\sum_{j=0}^{\infty}\hat{c}_j r^{\g(i)-j}t^{\g(i)-j+\rho_\infty(i)-1}\right)\]
where 
\[
\hat{c}_j=r^{2j-2\g(i)+1}\tau_*\left( c_j(-R^*\pi_*\mathcal L_r)\cap [\bM_{\Gamma^0_i}(\cD_0)]^{\on{vir}}\right).
\] 
By \cite{JPPZ18}, for each $j\geq 0$, the class $\hat{c}_j$ is a polynomial in $r$ when $r$ is sufficiently large. Furthermore, a refined polynomiality is proved in Corollary \ref{cor:cycrel} which states that $\hat{c}_j r^{\g(i)-j}$ is a polynomial in $r$, for each $j\geq 0$. Therefore, $r^{\rho_-}\tau_*\left([\bM_{\Gamma}^{\bullet}(P_{D_0,r}, D_\infty)]^{\on{vir}}\right)$ is a polynomial in $r$. We note that for $j<\g(i)$, the constant term of $\hat{c}_j r^{\g(i)-j}$ actually vanishes.

It remains to take the coefficient of $t^0r^{-\rho_-}$ in the class $\tau_*\left([\bM_{\Gamma}^{\bullet}(P_{D_0,r}, D_\infty)]^{\on{vir}}\right)$.  Then Theorem \ref{thm:rel-orb} follows from Corollary \ref{cor:HHI} by equating the constant terms of Hurwitz--Hodge cycles with rubber cycles.

\end{proof}

\subsection{Relative theory forms a partial CohFT}\label{sec:pcohft}
As a consequence of Theorem \ref{thm:rel-orb}, we will show that the all genera relative theory (possibly with negative contact) forms a partial cohomological field theory (CohFT) in the sense of \cite {LRZ}*{Definition 2.7}. 

We begin with a brief review of the axioms of CohFT. Let $\bM_{g,m}$ be the moduli space of of genus $g$, $m$-pointed stable curves. We assume that $2g-2+m>0$. There are several canonical morphisms between the $\bM_{g,m}$. 
\begin{itemize}
    \item Forgetful morphism
    \[
    \pi:\bM_{g,m+1}\rightarrow \bM_{g,m}
    \]
    obtained by forgetting the last marking.
    \item Gluing the loop
    \[
    \rho_l: \bM_{g,m+2}\rightarrow \bM_{g+1,m}
    \]
    obtained by identifying the last two markings of the $(m+2)$-pointed, genus $g$ curves.
    \item Gluing the tree
    \[
    \rho_t:\bM_{g_1,m_1+1}\times \bM_{g_2,m_2+1}\rightarrow \bM_{g_1+g_2,m_1+m_2}
    \]
    obtained by identifying the last markings of separate pointed curves.
\end{itemize}

Let $H$ be a graded vector space with a non-degenerate pairing $\langle ,\rangle$ and a distinguished element $1\in H$. Given a basis $\{e_i\}$, let $\eta_{jk}=\langle e_j,e_k\rangle$ and $(\eta^{jk})=(\eta_{jk})^{-1}$.

A CohFT is a collection of homomorphisms
\[
\Omega_{g,m}: H^{\otimes m}\rightarrow H^*(\bM_{g,m},\mathbb Q)
\]
satisfying the following axioms:
\begin{itemize}
    \item The element $\Omega_{g,m}$ is invariant under the natural action of symmetric group $S_m$.
    \item For all $a_i\in H$, $\Omega_{g,m}$ satisfies
    \[
    \Omega_{g,m+1}(a_1,\ldots,a_m,1)=\pi^*\Omega_{g,m}(a_1,\ldots,a_m).
    \]
    \item The splitting axiom:
    \begin{align*}
    &\rho^*_t\Omega_{g_1+g_2,m_1+m_2}(a_1,\ldots,a_{m_1+m_2})=\\
    &\sum_{j,k}\eta^{jk}\Omega_{g_1,m_1}(a_1,\ldots,a_{m_1},e_j)\otimes \Omega_{g_2,m_2}(a_{m_1+1},\ldots,a_{m_1+m_2},e_k),
    \end{align*}
    for all $a_i\in H$.
    \item The loop axiom:
    \[
    \rho_l^*\Omega_{g+1,m}(a_1,\ldots,a_m)=\sum_{j,k}\eta^{jk}\Omega_{g,m+2}(a_1,\ldots,a_m,e_j,e_k),
    \]
    for all $a_i\in H$. In addition, the equality
    \[
    \Omega_{0,3}(v_1,v_2,1)=\langle v_1,v_2\rangle
    \]
    holds for all $v_1,v_2\in H$.
\end{itemize}

\begin{defn}[\cite{LRZ}, Definition 2.7]
If the collection $\{\Omega_{g,m}\}$ satisfies all the axioms except for the loop axiom, we call it a partial CohFT.
\end{defn}

Following \cite{FWY}, the ring of insertions for relative Gromov--Witten theory is defined to be
\[
\HH=\bigoplus\limits_{i\in\Z}\HH_i,
\]
where $\HH_0=H^*(X)$ and $\HH_i=H^*(D)$ if $i\in \Z - \{0\}$.
For an element $\alpha\in \HH_i$, we write $[\alpha]_i$ for its embedding in $\HH$. 

The pairing on $\HH$ is defined as follows.
\begin{equation}\label{eqn:pairing}
\begin{split}
([\alpha]_i,[\beta]_j) = 
\begin{cases}
0, &\text{if } i+j\neq 0,\\
\int_X \alpha\cup\beta, &\text{if } i=j=0, \\
\int_D \alpha\cup\beta, &\text{if } i+j=0, i,j\neq 0.
\end{cases}
\end{split}
\end{equation}
In the next definition, we need to use the forgetful map 
\[
\pi:\bM_{g,m}(X,\beta) \times_{X^\rho} D^\rho \rightarrow \bM_{g,m},
\]
where a list of contact orders will be implied in the context, and the fiber product remembers the $\rho$ markings that correspond to relative markings. 

For a sufficiently large integer $r$, an element $[\alpha]_i$ naturally corresponds to a cohomology class $\alpha\in H^*(\underline{I}X_{D,r})$ that lies in either a component with age $i/r$ (if $i\geq0$), or a component with age $(r+i)/r$ (if $i<0$). To simplify the notation in the next definition, we later refer to a general element of $\HH$ by simply writing $[\alpha]\in \HH$ without a subscript. 
\begin{defn}
Given elements $[\alpha_1],\ldots,[\alpha_m]\in \HH$, the relative Gromov--Witten class is defined as
\[
\Omega^{(X,D)}_{g,m,\beta}([\alpha_1],\ldots,[\alpha_m])=\pi_*\left(\prod_{i=1}^m\overline{ev}_i^*(\alpha_i)\cap\mathfrak c_\Gamma(X/D)\right)\in H^*(\bM_{g,m},\mathbb Q),
\]
where the topological type $\Gamma$ is determined by $g,m,\beta$ and the insertions $[\alpha_1],\ldots,[\alpha_m]\in \HH$. We then define the class
\[
\Omega^{(X,D)}_{g,m}([\alpha_1],\ldots,[\alpha_m])=\sum_{\beta\in H_2(\mathcal X,\mathbb Q)}\Omega^{(X,D)}_{g,m,\beta}([\alpha_1],\ldots,[\alpha_m])q^\beta
\]
\end{defn}

\begin{thm}
$\Omega^{(X,D)}_{g,m}$ forms a partial CohFT.
\end{thm}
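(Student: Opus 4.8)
The plan is to verify the four partial-CohFT axioms for $\Omega^{(X,D)}_{g,m}$ one at a time, in each case pulling the corresponding structural identity back from the orbifold theory $X_{D,r}$ via Theorem \ref{thm:rel-orb} (equivalently Theorem \ref{thm:limit}) and then extracting the constant term in $r$. The orbifold Gromov--Witten classes of $X_{D,r}$ form a genuine CohFT (this is standard, e.g.\ from \cite{AGV}), so each axiom holds before taking the limit; since $\tau_*[\bM_\Gamma(X_{D,r})]^{\vir}$, suitably scaled by $r^{\rho_-}$, is polynomial in $r$ with constant term $\mathfrak c_\Gamma(X/D)$, and since the operations $\pi^*,\rho_t^*,\rho_l^*$ are $r$-independent maps on the target cohomology, the identities descend to the constant terms — except for the loop axiom, which is precisely the one we do not claim. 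The main point to check is that the Poincar\'e pairing and the relative classes transform correctly under the degenerations; here the subtlety is the age bookkeeping in Convention \ref{conv:Gamma} and the splitting of a node into a pair of markings with opposite contact orders.

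First I would handle the two easy axioms. The $S_m$-equivariance of $\Omega^{(X,D)}_{g,m}$ is immediate from the symmetry of the construction of $\mathfrak c_\Gamma(X/D)$ (the bipartite graph sum and the fiber-product moduli are manifestly symmetric in the markings of a fixed type), together with symmetry of the orbifold classes. For the fundamental-class (string-type) axiom $\Omega^{(X,D)}_{g,m+1}([\alpha_1],\dots,[\alpha_m],1)=\pi^*\Omega^{(X,D)}_{g,m}([\alpha_1],\dots,[\alpha_m])$, note $1\in\HH$ is $1\in\HH_0=H^*(X)$, corresponding under the root-stack dictionary to the fundamental class of the untwisted sector; the forgetful compatibility of orbifold virtual classes under $\bM_\Gamma(X_{D,r})\to\bM_{\Gamma'}(X_{D,r})$ forgetting an untwisted marking, combined with $\rho_-$ being unchanged (an untwisted marking contributes nothing to \eqref{rho-neg}), gives the identity after $\tau_*$ and passage to the constant term. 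One should also record the normalization $\Omega^{(X,D)}_{0,3}([v_1],[v_2],1)=([v_1],[v_2])$, which for $v_1,v_2\in H^*(X)$ is the $X$-Poincar\'e pairing and follows from the genus-zero string/unit computation already present in \cite{FWY}.

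The substantive axiom is the splitting (tree-gluing) axiom. Here I would degenerate along $\rho_t$: a stable map in $\bM_{g,m}(X_{D,r})$ with reducible domain of the prescribed tree type decomposes, and the twisted node on the central fiber is a balanced orbifold node whose two branches carry opposite ages $k/r$ and $(r-k)/r$. Summing over $k\in\{0,1,\dots,r-1\}$ and inserting the diagonal of $\underline I X_{D,r}$ reproduces $\sum_{j,k}\eta^{jk}\,e_j\otimes e_k$; the age-$0$ term contributes the $H^*(X)$-diagonal and the age-$k$ terms (for $k\neq 0$) contribute, after identifying age $k/r$ with $[\,\cdot\,]_k$ and age $(r-k)/r$ with $[\,\cdot\,]_{-k}$ and accounting for the $r^{\rho_-}$ normalization on each factor, exactly the $H^*(D)$-diagonal weighted by the pairing \eqref{eqn:pairing} — the key arithmetic check being that $\rho_-$ is additive across the two sides of the node (a positive-age marking on one branch pairs with a positive-age — i.e.\ ``large age'', after the $r+\mu$ shift — marking on the other, so the contributions to \eqref{rho-neg} add up). Passing to the constant term in $r$ on both sides then yields the splitting axiom for $\Omega^{(X,D)}$, using that the constant term of a product of polynomials is the product of constant terms. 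The hard part will be precisely this matching of the $r$-power normalizations with the node-splitting combinatorics: one must confirm that the factor $r^{\rho_-}$ distributes as $r^{\rho_-^{(1)}}\cdot r^{\rho_-^{(2)}}$ across the two sides with no leftover power of $r$ coming from the orbifold node itself, so that no spurious factor survives in the limit. This can be checked either directly from the degeneration/localization analysis already carried out in the proof of Theorem \ref{thm:rel-orb} (the $C_0(v_i)$, $C_\infty$ bookkeeping there is exactly of this flavor), or by reducing to the genus-zero splitting identity of \cite{FWY} on each vertex and propagating it through the graph sum. Finally, I would explicitly \emph{omit} the loop axiom: gluing two branches of a single curve at a balanced node again sums over ages $k$ and $r-k$, but now the two ``ends'' lie on the \emph{same} component, so the $\rho_-$-count is no longer additive in the naive way and the constant-term extraction does not commute with the sum over $k$ — this is the structural reason the relative theory is only a \emph{partial} CohFT, consistent with the discussion in \cite{LRZ} and the Introduction.
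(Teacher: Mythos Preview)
Your approach is the paper's: deduce each axiom from the corresponding orbifold CohFT axiom for $X_{D,r}$ and then extract the $r^0$ coefficient via Theorem \ref{thm:rel-orb}; the paper dismisses the first two axioms in one sentence and devotes its argument to the splitting axiom, exactly as you do.

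The one point you should sharpen is the claim that ``$\rho_-$ is additive across the two sides of the node.'' When the node is twisted with ages $k/r$ and $(r-k)/r$ ($0<k<r$), exactly one of the two new markings is large-age, so in fact $\rho_{1-}+\rho_{2-}=\rho_-+1$, not $\rho_-$. The extra factor of $r$ you are worried about is real, and it comes not from the virtual class but from the orbifold Poincar\'e pairing: on a twisted sector (a copy of the $\mu_r$-gerbe $D_r$) the pairing is $\tfrac{1}{r}\int_D\alpha\cup\beta$, so $\eta^{jk}_{\mathrm{orb}}=r\,\eta^{jk}_{\mathrm{rel}}$ there, and this $r$ combines with $r^{\rho_-}$ to yield $r^{\rho_{1-}}r^{\rho_{2-}}$. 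This is precisely the content of the paper's remark (c) (``the class $e_j\in H^*(D_r)$ is identified with $r e_j\in H^*(D)$''). With that correction your ``constant term of a product equals product of constant terms'' step goes through, since Theorem \ref{thm:rel-orb} ensures each factor $r^{\rho_{i-}}\Omega^{X_{D,r}}_{g_i,m_i+1}(\ldots)$ is a genuine polynomial (not merely Laurent) in $r$. You should also note, as the paper does in its remarks (b) and (d), that for each fixed curve-class splitting the age at the node is determined, so the relative-side sum over sectors $i\in\Z$ is finite.
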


\begin{proof}
It is easy to see that our relative Gromov--Witten theory satisfies the first two axioms of the CohFT. The splitting axiom for relative Gromov--Witten theory can be proved by using the splitting axiom for the corresponding orbifold Gromov--Witten theory. We write $\Omega^{X_{D,r}}_{g,m,\beta}(\alpha_1,\ldots,\alpha_m)$ for the orbifold Gromov--Witten class of the root stack $X_{D,r}$. For $r$ sufficiently large, we have
\begin{align*}
    &\rho^*_t\Omega^{(X,D)}_{g_1+g_2,m_1+m_2}([\alpha_1],\ldots,[\alpha_{m_1+m_2}])\\
    =&\rho^*_t\left[r^{\rho_-}\Omega^{X_{D,r}}_{g_1+g_2,m_1+m_2}(\alpha_1,\ldots,\alpha_{m_1+m_2})\right]_{r^0}\\
    =&\left[r^{\rho_-}\sum_{j,k}\eta^{jk}\Omega_{g_1,m_1}^{X_{D,r}}(\alpha_1,\ldots,\alpha_{m_1},e_j)\otimes \Omega_{g_2,m_2}^{X_{D,r}}(\alpha_{m_1+1},\ldots,\alpha_{m_1+m_2},e_k)\right]_{r^0}\\
    =&\sum_{j,k}\eta^{jk}\left[r^{\rho_{1-}}\Omega_{g_1,m_1}^{X_{D,r}}(\alpha_1,\ldots,\alpha_{m_1},e_j)\right]_{r^0}\otimes \left[r^{\rho_{2-}}\Omega_{g_2,m_2}^{X_{D,r}}(\alpha_{m_1+1},\ldots,\alpha_{m_1+m_2},e_k)\right]_{r^0}\\
    =&\sum_{j,k}\eta^{jk}\Omega_{g_1,m_1}^{(X,D)}([\alpha_1],\ldots,[\alpha_{m_1}],[e_j])\otimes \Omega_{g_2,m_2}^{(X,D)}([\alpha_{m_1+1}],\ldots,[\alpha_{m_1+m_2}],[e_k])
\end{align*}
Some explanations and remarks about the equalities are as follows: 
\begin{enumerate}
    \item $\rho_-, \rho_{1-}, \rho_{2-}$ are the numbers of negative markings of the corresponding topological types.
    \item The contact orders (and ages) of $e_j$ and $e_k$ are already determined by the insertions $\alpha_i$.
      \item In the case when the new marking corresponding to $e_j$ is a large age marking, the class $e_j\in H^*(D_r)$ is identified with $r e_j\in H^*(D)$ as the orbifold pairing for the $\mu_r$-gerbe $D_r$ requires an extra factor of $r$. Similarly, for $e_k$. Therefore the fourth line is valid.
    \item The summation on the fifth line is a finite sum.
    \item We use the same notation $\eta^{jk}$ for both orbifold Gromov--Witten classes and relative Gromov--Witten classes, although $\eta^{jk}$ actually lives in different spaces for these two cases. 
\end{enumerate}
 \end{proof}

Next, we will give an example which shows that our relative theory do not always satisfy the loop axiom.
\begin{ex}
Let $X=\mathbb{P}^1$ and $D=pt$ be a point in $X$. Let $1$ to be the identity in the cohomology ring and $\omega$ is the point class in $X$. Let us consider $\Omega^{(X,D)}_{1,1,0}([1]_0)$. In this case, $\bM_{1,1}(X,0)=\bM_{1,1}\times X$ and 
\[\mathfrak c_\Gamma(X/D)=[\bM_{\Gamma}(X,D)]^{\vir}=c_1(\mathbb{E}^{\vee}\boxtimes T_X(-\mathrm{log} D))\cap [\bM_{1,1}\times X] \]
where $\mathbb{E}$ is the Hodge bundle. So by definition, it is easy to compute that $\Omega^{(X,D)}_{1,1,0}([1]_0)=1\in H^0(\bM_{1,1})$. Now if the loop axiom holds, it must equal to 
\[2\Omega^{(X,D)}_{0,3,0}([1]_0,[1]_0,[\omega]_0)+\sum_{i\in \mathbb{Z}^*}\Omega^{(X,D)}_{0,3,0}([1]_0,[1]_i,[1]_{-i}).\]
But since $\mathfrak c_\Gamma(X/D)$ in $\Omega^{(X,D)}_{0,3,0}([1]_0,[1]_i,[1]_{-i})$ contains only one negative marking, Example 5.5 in \cite{FWY} implies that 
\[\Omega^{(X,D)}_{0,3,0}([1]_0,[1]_i,[1]_{-i})=1\in H^0(\bM_{0,3}),\quad \forall\,i\in \mathbb{Z}^*.\]
Now $\sum_{i\in \mathbb{Z}^*}\Omega^{(X,D)}_{0,3,0}([1]_0,[1]_i,[1]_{-i})$ is an infinite sum which is not convergent! So in this case, the loop axiom does not hold.
\end{ex}

At this point, we do not know how to find a replacement of the loop axiom.
 


\section{Hurwitz--Hodge classes and DR-cycles}\label{sec:HH-DR}
In this section, we extend the trick in \cite{FWY}*{Appendix} to higher genera. We first summarize the main result of this section.

Recall that $P_{D_0,r}$ is the $r$th root stack of $P:=\Pp_D(L\oplus \sO)$, and $\cD_0, \cD_\infty$ are two invariant substacks. $\cD_0$ is isomorphic to $\sqrt[r]{D/L}$ and $\cD_\infty$ is isomorphic to $D$.
Let $\bM_{\Gamma}^\sim(D)$ be the moduli of relative stable maps to rubber
targets over $D$. Suppose that $\Gamma$ imposes weights $\vec{\mu}=(\mu_1,\ldots,\mu_{\rho_0})$ on $0$-roots
, $\vec{\nu}=(-\nu_1,\ldots,-\nu_{\rho_\infty})$ on $\infty$-roots, and denote the number of legs by $n$. In other words, we have tangency conditions $\vec{\mu}$ on $0$-side, $-\vec{\nu}$ on $\infty$-side and $n$ interior markings. For orbifold theory, we define
a vector of ages
\[
\vec{a}=(
(r-\nu_1)/r,\ldots,(r-\nu_{\rho_\infty})/r,\mu_1/r,\ldots,\mu_{\rho_0}/r,\underbrace{0,\ldots,0}_n
 ).
\]
There are the following forgetful
maps
\begin{align*}
  \begin{split}
  \tau_1:&\bM_{g,\vec{a}}(\cD_0,\beta)\rightarrow \bM_{g,n+\rho_0+\rho_\infty}(D,\beta),\\ 
  \tau_2:&\bM_{\Gamma}^\sim(D)\rightarrow \bM_{g,n+\rho_0+\rho_\infty}(D,\beta).
  \end{split}
\end{align*}
Here under $\tau_2$, $\infty$-roots are identified as markings $1,\ldots,\rho_\infty$, $0$-roots as markings $\rho_\infty+1,\ldots,\rho_\infty+\rho_0$, and legs as markings $\rho_\infty+\rho_0+1,\ldots,\rho_\infty+\rho_0+n$.

Recall $\Psi_\infty$ is the divisor corresponding to the cotangent line bundle determined by the relative divisor on $\infty$ side.
On the root gerbe $\cD_0$, there is a universal line bundle $L_r$. Let $\pi:\mathcal
C\rightarrow \bM_{g,\vec{a}}(\cD_0,\beta)$ be the universal curve and $f:\mathcal C\rightarrow
\cD_0$ the map to the target. Let $\mathcal L_r = f^*L_r$ and 
\[-R^*\pi_*\mathcal L_r:=R^1\pi_*\mathcal L_r-R^0\pi_*\mathcal L_r \in K^0(\bM_{g,\vec{a}}(\cD_0,\beta)).\]

\begin{thm}\label{thm:appdx}
Let $t$ be a formal variable and $r\gg 1$. Under the above notation, the cycle 
\[(\tau_1)_*\left(\sum\limits_{i=0}^\infty \left(\dfrac{t}{r}\right)^{g-i-1}c_i(-R^*\pi_*\mathcal L_r) \cap [\bM_{g,\vec{a}}(\cD_0,\beta)]^{\on{vir}} \right)\]
is a polynomial in $r$. And
we have the following equality 
\begin{align*}
\begin{split}
&\left[  (\tau_1)_*\left(\sum\limits_{i=0}^\infty \left(\dfrac{t}{r}\right)^{g-i-1}c_i(-R^*\pi_*\mathcal L_r) \cap [\bM_{g,\vec{a}}(\cD_0,\beta)]^{\on{vir}} \right)  \right]_{r^0}  \\
=& {\prod\limits_{i=1}^{\rho_\infty}\left(1+\dfrac{\ev_i^*c_1(L)-\nu_i\psi_i}{t}\right)} \cap (\tau_2)_*\left(\dfrac{1}{t-\Psi_\infty} \cap [\bM_{\Gamma}^\sim(D)]^{\on{vir}}\right).
\end{split}  
\end{align*}
in $A_*(\bM_{g,n+\rho_0+\rho_\infty}(D,\beta))[t,t^{-1}]$ as Laurent polynomials in $t$
\end{thm}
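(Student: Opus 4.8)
The plan is to extend the localization argument from \cite{FWY}*{Appendix} to higher genus, the new feature being that the Hurwitz--Hodge bundle $-R^*\pi_*\mathcal L_r$, which was virtually trivial in genus zero, must now be carried through. The auxiliary geometry is the $r$th root stack $P_{D_0,r}$ of $P=\Pp_D(L\oplus\sO)$ along $D_0$, together with its fiberwise $\C^*$-action, whose fixed loci are $\cD_0$ and $D_\infty\cong D$; let $t$ be the equivariant parameter, normalized so that the fiber tangent weight at $D_0$ is $t$. The relevant moduli space is $\bM^\bullet_{\hat\Gamma}(P_{D_0,r},D_\infty)$ of relative stable maps relative to $D_\infty$, where $\hat\Gamma$ records the data of $\Gamma$: genus $g$, $n$ interior markings, $\rho_0$ orbifold markings over $\cD_0$ of ages $\mu_i/r$, $\rho_\infty$ relative markings over $D_\infty$ of contact orders $\nu_i$, and a curve class projecting to $\beta$ on $D$ with minimal fiber degree (the balancing being exactly \eqref{eqn:rubberint}). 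This is the same kind of space whose virtual localization yields the factors $C_0(v_i)$ and $C_\infty$ in the proof of Theorem \ref{thm:rel-orb}, and the maps $\tau_1$, $\tau_2$ of the statement both factor through the forgetful map $\tau\colon\bM^\bullet_{\hat\Gamma}(P_{D_0,r},D_\infty)\to\bM_{g,n+\rho_0+\rho_\infty}(D,\beta)$.

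The first step is to apply virtual $\C^*$-localization to $[\bM^\bullet_{\hat\Gamma}(P_{D_0,r},D_\infty)]^{\vir}$. The fixed loci are organized by localization graphs consisting of a part of the domain contracted into $\cD_0$, a rubber part over $D_\infty$ carrying the $\rho_\infty$ relative markings, and fiberwise multiple covers joining the two. On the $\cD_0$-side, $N_{\cD_0/P_{D_0,r}}$ pulls back to $\mathcal L_r$, so the vertex contribution is exactly the series $\sum_{i\geq 0}(t/r)^{g-i-1}c_i(-R^*\pi_*\mathcal L_r)$ of the left-hand side (this is the factor $C_0(v_i)$ restricted to a single stable vertex). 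On the $D_\infty$-side one gets the rubber cotangent series $\tfrac{1}{t-\Psi_\infty}$ (the factor $C_\infty$), and each of the $\rho_\infty$ joining covers, of degree $\nu_i$, restricts to an edge factor $1+\tfrac{\ev_i^*c_1(L)-\nu_i\psi_i}{t}$. The identity of the theorem is then extracted by comparing these two families of contributions to $\tau_*[\bM^\bullet_{\hat\Gamma}(P_{D_0,r},D_\infty)]^{\vir}$ in the localization formula: the left-hand cycle is the ``single $\cD_0$-vertex'' contribution, the remaining graphs contribute only to negative powers of $r$ and hence vanish after $[\,\cdot\,]_{r^0}$, and the rubber contribution, once the rubber fixed locus is identified with $\bM_\Gamma^\sim(D)$ together with the induced identifications of evaluation and $\psi$-classes, is the right-hand side. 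Taking coefficients of $t^0$ throughout gives the claimed equality of Laurent polynomials in $t$.

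For the polynomiality in $r$, I would combine two inputs: the polynomiality of $(\tau_1)_*\big(c_i(-R^*\pi_*\mathcal L_r)\cap[\bM_{g,\vec{a}}(\cD_0,\beta)]^{\vir}\big)$ for $r\gg 1$, which is the target version of Chiodo's formula proved in \cite{JPPZ18}; and the Riemann--Roch computation on the orbicurve, which gives the virtual rank of $-R^*\pi_*\mathcal L_r$ (making the sum over $i$ finite) and bounds its $r$-adic valuation, so that multiplication by $(t/r)^{g-i-1}$ cancels the negative powers of $r$. Equivalently, the polynomiality is forced by the localization comparison of the first step, since every other contribution there is manifestly polynomial in $r$.

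The main obstacle is bookkeeping rather than geometry. The delicate points are: (i) making the root-stack/relative dictionary exact rather than merely asymptotic in $r$, so that an orbifold marking of age $\mu_i/r$ over $\cD_0$ matches a $0$-root of weight $\mu_i$ and an orbifold marking of age $(r-\nu_i)/r$ matches an $\infty$-root of weight $-\nu_i$, with the corresponding identification of evaluation maps and $\psi$-classes on $\bM_\Gamma^\sim(D)$; (ii) tracking every power of $r$ --- from orbifold markings, from the extra factor of $r$ in the $\mu_r$-gerbe pairing on $\cD_0$, from the normal-bundle weights $t/r$, and from the automorphism orders $|\on{Aut}(\fG)|$ --- so that the coefficient of $t^0r^0$ comes out clean; and (iii) verifying that the only localization graphs surviving the $t^0$- and $r^0$-extractions are those reproducing the two sides, in particular that the genus and the curve class are distributed as in the statement. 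Once (i)--(iii) are settled the identity follows formally from the localization formula, exactly as in the genus-zero case treated in \cite{FWY} and in \cite{JPPZ18}.
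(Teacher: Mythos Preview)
Your plan has a genuine gap: you localize on a \emph{single} moduli space $\bM^\bullet_{\hat\Gamma}(P_{D_0,r},D_\infty)$ and then try to read off both sides of the identity as two separate fixed-locus contributions. But the localization formula only expresses the total equivariant pushforward as a \emph{sum} over graphs; it does not equate one graph's contribution to another's. To turn such a sum into a relation you would have to compute the total independently, and that only yields the non-equivariant ($t^0$) coefficient, whereas the theorem is an identity of Laurent polynomials in $t$. There is also a mismatch of data: in your space the $\mu_i$ are orbifold ages at $\cD_0$, so a $D_\infty$-rubber vertex in your localization has the $\mu_i$-nodes on its $D_0$-side (the inner side) and carries the target-psi class $\Psi_0$ with residue $\prod_j\mu_j/(-t-\Psi_0)$, not the $\tfrac{1}{t-\Psi_\infty}$ you wrote; and the factors $1+\tfrac{\ev_i^*c_1(L)-\nu_i\psi_i}{t}$ at the $\rho_\infty$ markings have no source in that graph, since its edges have degrees $\mu_j$, not $\nu_i$.

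The paper's argument is different in kind. One compares \emph{two} equivariant theories with no conditions at $D_\infty$, namely $\bM^{\rel}(P,D_0)$ and $\bM^{\text{orb}}(P_{D_0,r})$, and proves that their equivariant pushforwards to $\bM_{g,n+\rho}(P,\beta)$ agree after taking $[\,\cdot\,]_{r^0}$ (and that the orbifold one is polynomial in $r$). Granting this, the residues at the common fixed component ``one vertex over $D_0$ (resp.\ $\cD_0$), $\rho_\infty$ edges of degrees $\nu_i$ to unstable $D_\infty$-vertices'' must agree as Laurent series in $t$: on the relative side the vertex is the rubber $\bM_\Gamma^\sim(D)$ with residue $\mathrm{Edge}/(t-\Psi_\infty)$, on the orbifold side it is $\bM_{g,\vec a}(\cD_0,\beta)$ with the Hurwitz--Hodge residue, and cancelling the common edge factor gives exactly the statement. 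The step you are missing is this \emph{equivariant} comparison lemma: in genus zero it followed from the non-equivariant result of \cite{TY} by a dimension argument (this is \cite{FWY}*{Appendix}), but in higher genus one has to first establish the comparison in families over an arbitrary $\C^*$-torsor $E\to B$ (using the fiberwise $\C^*$-action and the same degeneration/localization analysis non-equivariantly on the family) and then take $E=E\C^*$. This family step is also what yields the claimed polynomiality; the input from \cite{JPPZ18} alone only gives that $r^{2i-2g+1}$ times the class is polynomial, which is weaker by $r^{g-i}$ than what is needed and does not suffice for your bookkeeping in (ii).
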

This result has a few consequences. For example, the polynomiality implies that
\begin{cor}\label{cor:cycrel}
For $r\gg 1$, the cycle \[(\tau_1)_*\left(r^{i-g+1}c_i(-R^*\pi_*\mathcal L_r)\cap [\bM_{g,\vec{a}}(\cD_0,\beta)]^{\on{vir}}\right)\]
is a polynomial in $r$.
\end{cor}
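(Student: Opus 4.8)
The plan is to read Corollary~\ref{cor:cycrel} off Theorem~\ref{thm:appdx} by bookkeeping the powers of the formal variable $t$. First I would note that $c_i(-R^*\pi_*\mathcal L_r)\cap[\bM_{g,\vec a}(\cD_0,\beta)]^{\on{vir}}$ lies in $A_{d-i}$, where $d$ is the virtual dimension of $\bM_{g,\vec a}(\cD_0,\beta)$, and hence vanishes once $i>d$. Therefore the series occurring in Theorem~\ref{thm:appdx},
\[
\sum\limits_{i=0}^\infty\left(\frac{t}{r}\right)^{g-i-1}c_i(-R^*\pi_*\mathcal L_r)\cap[\bM_{g,\vec a}(\cD_0,\beta)]^{\on{vir}},
\]
is in fact a finite Laurent polynomial in $t$, whose coefficient of $t^{\,g-1-i}$ is exactly $r^{\,i-g+1}\,c_i(-R^*\pi_*\mathcal L_r)\cap[\bM_{g,\vec a}(\cD_0,\beta)]^{\on{vir}}$, since $(t/r)^{g-i-1}=r^{\,i-g+1}\,t^{\,g-1-i}$ and distinct values of $i$ contribute distinct powers of $t$.

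Next I would invoke the first assertion of Theorem~\ref{thm:appdx}: after applying $(\tau_1)_*$ --- which acts coefficient-by-coefficient and leaves the formal variable $t$ untouched --- the resulting element of $A_*(\bM_{g,n+\rho_0+\rho_\infty}(D,\beta))[t,t^{-1}]$ is a polynomial in $r$. Concretely this means there are finitely many cycles $P_0,\dots,P_N\in A_*(\bM_{g,n+\rho_0+\rho_\infty}(D,\beta))[t,t^{-1}]$, independent of $r$, with
\[
(\tau_1)_*\!\left(\sum\limits_{i=0}^\infty\left(\frac{t}{r}\right)^{g-i-1}c_i(-R^*\pi_*\mathcal L_r)\cap[\bM_{g,\vec a}(\cD_0,\beta)]^{\on{vir}}\right)=\sum\limits_{k=0}^N P_k\,r^k
\]
for all $r\gg 1$. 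Extracting the coefficient of $t^{\,g-1-i}$ on both sides and using the identification from the previous step gives
\[
(\tau_1)_*\!\left(r^{\,i-g+1}\,c_i(-R^*\pi_*\mathcal L_r)\cap[\bM_{g,\vec a}(\cD_0,\beta)]^{\on{vir}}\right)=\sum\limits_{k=0}^N\big[P_k\big]_{t^{\,g-1-i}}\,r^k,
\]
which is a polynomial in $r$ because each $\big[P_k\big]_{t^{\,g-1-i}}$ is a fixed cycle class; for $i>d$ both sides vanish, so the statement holds there trivially. This proves the corollary.

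There is no real obstacle here, since all of the geometric input sits inside Theorem~\ref{thm:appdx}. The only point worth flagging is that the source moduli space $\bM_{g,\vec a}(\cD_0,\beta)$ depends on $r$ through both the root gerbe $\cD_0\cong\sqrt[r]{D/L}$ and the age vector $\vec a$, so ``polynomial in $r$'' is only meaningful after the pushforward to the fixed target $\bM_{g,n+\rho_0+\rho_\infty}(D,\beta)$ --- and that pushforward, together with its polynomiality in $r$, is precisely what Theorem~\ref{thm:appdx} provides.
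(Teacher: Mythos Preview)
Your argument is correct and is exactly the approach the paper has in mind: the corollary is stated immediately after Theorem~\ref{thm:appdx} with the remark that ``the polynomiality implies'' it, and your proof simply makes explicit the extraction of the $t^{g-1-i}$ coefficient from the first assertion of that theorem. Your observation that the sum is finite (so genuinely a Laurent polynomial in $t$) and your remark about why the pushforward to $\bM_{g,n+\rho_0+\rho_\infty}(D,\beta)$ is needed for the statement to make sense are both useful clarifications that the paper leaves implicit.
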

In \cite{JPPZ18}, we already know that $r^{2i-2g+1}(\tau_1)_*c_i(-R^*\pi_*\mathcal L_r)$ is a polynomial in $r$ in Chow cohomology. When $i>g$, this corollary is an improved bound of the lowest degree after capping with the virtual class. On the other hand, we also conclude that those $i<g$ summands do not contribute to the left hand side of Theorem \ref{thm:appdx}. Precisely, we have
\begin{equation}\label{eqn:appvanish}
\left[  (\tau_1)_*\left(r^{i-g+1}c_i(-R^*\pi_*\mathcal L_r)\cap [\bM_{g,\vec{a}}(\cD_0,\beta)]^{\on{vir}} \right) \right]_{r^0} = 0
\end{equation}
for $i<g$.

As another interpretation of Theorem \ref{thm:appdx}, we have a precise formula of the leading term of $c_i(-R^*\pi_*\mathcal L_r)$. For $1\leq i\leq \rho_\infty$, we write 
\begin{equation}\label{eqn:p}
p_i=\nu_i\tau_2^*\psi_{i}-\ev_{i}^*c_1(L),
\end{equation}
and 
\[
\sigma_l=\sum\limits_{1\leq i_1<\ldots<i_l\leq\rho_\infty}p_{i_1}\ldots p_{i_l}.
\]
We set $\sigma_l=0$ if $l>\rho_{\infty}$.

\begin{cor}\label{cor:HHI}
Under the above notation, for any positive integer $k$, $r\gg 1$ and $i\geq g$, the following equality holds in $A_*(\bM_{g,n+\rho_0+\rho_\infty}(D,\beta))$.
\begin{align*}
  \begin{split}
    &\left[ (\tau_1)_*\left(r^{i-g+1} c_i(-R^*\pi_*\mathcal L_r)\cap [\bM_{g,\vec{a}}(\cD_0)]^{\on{vir}}\right) \right]_{r^0} \\
    =&(\tau_2)_*\left( (\Psi_\infty^{i-g}-\Psi_\infty^{i-g-1}\sigma_1+\ldots+(-1)^{i-g}\sigma_{i-g}) \cap [\bM_{\Gamma}^\sim(D)]^{\on{vir}}\right).
  \end{split}
\end{align*}
\end{cor}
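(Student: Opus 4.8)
The plan is to read off Corollary \ref{cor:HHI} as the coefficient of a single power of $t$ in the identity of Theorem \ref{thm:appdx}, which we are entitled to assume. Both sides of that identity are genuine (truncated) Laurent polynomials in $t$: on the right, $\frac{1}{t-\Psi_\infty}$ and each factor $1+\frac{\ev_i^*c_1(L)-\nu_i\psi_i}{t}$ expand into finite sums because $\Psi_\infty$ and the classes $\ev_i^*c_1(L),\psi_i$ are nilpotent, while on the left $c_{i'}(-R^*\pi_*\mathcal L_r)$ vanishes for $i'$ above the dimension of $\bM_{g,\vec a}(\cD_0,\beta)$. Hence comparing $t$-coefficients is legitimate, with no convergence subtlety.

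First I would expand the right-hand side of Theorem \ref{thm:appdx}. Since $\ev_i^*c_1(L)-\nu_i\psi_i=-p_i$ with $p_i$ as in \eqref{eqn:p} (the $\psi_i$ there being $\tau_2^*\psi_i$, compatibly with the evaluation maps under $\tau_2$), the product equals $\prod_{i=1}^{\rho_\infty}(1-p_i/t)=\sum_{l\ge 0}(-1)^l\sigma_l t^{-l}$, a polynomial in $t^{-1}$ with $\sigma_l=0$ for $l>\rho_\infty$. Expanding also $\frac{1}{t-\Psi_\infty}=\sum_{k\ge 0}\Psi_\infty^k t^{-k-1}$ and using the projection formula to pull the $\sigma_l$ inside $(\tau_2)_*$, the right-hand side of Theorem \ref{thm:appdx} equals
\[
(\tau_2)_*\left(\sum_{l,k\ge 0}(-1)^l\sigma_l\Psi_\infty^{\,k}\,t^{-l-k-1}\cap[\bM_\Gamma^\sim(D)]^{\vir}\right),
\]
so for $i\ge g$ the coefficient of $t^{g-i-1}=t^{-(i-g+1)}$ comes from the terms with $l+k=i-g$ and equals $(\tau_2)_*\big(\sum_{l=0}^{i-g}(-1)^l\sigma_l\Psi_\infty^{\,i-g-l}\cap[\bM_\Gamma^\sim(D)]^{\vir}\big)$, which is exactly the right-hand side of Corollary \ref{cor:HHI}.

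Next I would match the corresponding coefficient on the left. Writing $\sum_{i'\ge 0}(t/r)^{g-i'-1}c_{i'}(-R^*\pi_*\mathcal L_r)=\sum_{i'\ge 0}t^{g-i'-1}r^{i'-g+1}c_{i'}(-R^*\pi_*\mathcal L_r)$, the exponents $g-i'-1$ are pairwise distinct, so extracting the coefficient of $t^{g-i-1}$ commutes both with applying $(\tau_1)_*(-\cap[\bM_{g,\vec a}(\cD_0)]^{\vir})$ and with taking $[\,\cdot\,]_{r^0}$ (Corollary \ref{cor:cycrel} guarantees each relevant class is an honest polynomial in $r$). The result is precisely $\big[(\tau_1)_*\big(r^{i-g+1}c_i(-R^*\pi_*\mathcal L_r)\cap[\bM_{g,\vec a}(\cD_0)]^{\vir}\big)\big]_{r^0}$, so equating the two coefficients proves the corollary. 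As a byproduct, for $i<g$ the exponent $g-i-1$ is nonnegative while the expression above has only strictly negative powers of $t$, whence the $r^0$-part vanishes; this is \eqref{eqn:appvanish}.

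The argument is entirely formal, so I do not expect a genuine obstacle; the only points requiring care are the sign in $\ev_i^*c_1(L)-\nu_i\psi_i=-p_i$, the projection-formula bookkeeping when the tautological classes $\sigma_l$ (defined on $\bM_{g,n+\rho_0+\rho_\infty}(D,\beta)$) are pulled through $(\tau_2)_*$ — equivalently, checking that $\psi_i$ in Theorem \ref{thm:appdx} matches the $\tau_2^*\psi_i$ used in \eqref{eqn:p} — and confirming once and for all that both sides of Theorem \ref{thm:appdx} are truncated Laurent polynomials in $t$ so that coefficient extraction is unambiguous.
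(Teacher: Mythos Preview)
Your proposal is correct and is precisely the argument the paper has in mind: the corollary is introduced as ``another interpretation of Theorem \ref{thm:appdx}'' with no further proof, and your coefficient extraction in $t$ (together with the projection-formula bookkeeping for $\sigma_l$) is exactly how one reads it off. The side remark recovering \eqref{eqn:appvanish} from the absence of nonnegative powers of $t$ on the right is also the paper's own observation.
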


Conversely, if one cares about DR-cycle, there is a formula for $(\tau_2)_*(\Psi_\infty^i\cap [\bM_{\Gamma}^\sim(D)]^{\vir})$.

\begin{cor}
The following equality holds in $A_*(\bM_{g,n+\rho_0+\rho_\infty}(D,\beta))$ for $r\gg 1$.
\begin{align*}
    &(\tau_2)_*\left(\Psi_\infty^i\cap [\bM_{\Gamma}^\sim(D)]^{\on{vir}}\right) \\
    =& (\tau_1)_*\left[ \sum\limits_{k_1,\ldots,k_{\rho_\infty}\geq 0} \left(\prod\limits_{j=1}^{\rho_\infty} (p_{j}')^{k_j}\right) r^{i+1-\sum k_j}c_{i-\sum k_j+g}(-R^*\pi_*\mathcal L_r) \cap [\bM_{g,\vec{a}}(\cD_0)]^{\on{vir}}\right]_{r^0},
\end{align*}
where $p_j'=\nu_j\tau_1^*\psi_{j}-\ev_{j}^*c_1(L)$.
\end{cor}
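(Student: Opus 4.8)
The plan is to invert the relation in Corollary~\ref{cor:HHI}, treating it as a triangular linear system expressing the rubber-cycle classes $(\tau_2)_*(\Psi_\infty^j\cap[\bM_\Gamma^\sim(D)]^{\vir})$ in terms of the Hurwitz--Hodge classes, and then solving for the former. Concretely, Corollary~\ref{cor:HHI} says that for $i\geq g$,
\[
\left[(\tau_1)_*\left(r^{i-g+1}c_i(-R^*\pi_*\mathcal L_r)\cap[\bM_{g,\vec a}(\cD_0)]^{\vir}\right)\right]_{r^0}
=(\tau_2)_*\left(\left(\sum_{l\geq 0}(-1)^l\Psi_\infty^{i-g-l}\sigma_l\right)\cap[\bM_\Gamma^\sim(D)]^{\vir}\right),
\]
where $\sigma_l$ is the $l$-th elementary symmetric polynomial in the $p_i=\nu_i\tau_2^*\psi_i-\ev_i^*c_1(L)$. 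Setting $m=i-g\geq 0$, the right-hand side is $(\tau_2)_*\big(e_m(\Psi_\infty;\sigma_\bullet)\cap[\cdots]\big)$ where I write $e_m$ for the weighted homogeneous combination $\sum_{l}(-1)^l\Psi_\infty^{m-l}\sigma_l$. The point is that $e_m=\Psi_\infty^m+(\text{lower powers of }\Psi_\infty)$, so the system is upper-triangular in $\{(\tau_2)_*(\Psi_\infty^m\cap[\cdots])\}_{m\geq 0}$ with unit diagonal, hence invertible.

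The inversion is bookkeeping with generating series. Introduce $\displaystyle S(x)=\sum_{m\geq 0}(\tau_2)_*(\Psi_\infty^m\cap[\bM_\Gamma^\sim(D)]^{\vir})\,x^m$ and $\displaystyle H(x)=\sum_{m\geq 0}\left[(\tau_1)_*\left(r^{m+1}c_{m+g}(-R^*\pi_*\mathcal L_r)\cap[\cdots]^{\vir}\right)\right]_{r^0}x^m$. Corollary~\ref{cor:HHI} is then the identity $H(x)=S(x)\cdot\prod_{j=1}^{\rho_\infty}(1-p_j x)$ (the product expands to $\sum_l(-1)^l\sigma_l x^l$). Therefore
\[
S(x)=H(x)\cdot\prod_{j=1}^{\rho_\infty}\frac{1}{1-p_j x}=H(x)\cdot\prod_{j=1}^{\rho_\infty}\sum_{k_j\geq 0}p_j^{k_j}x^{k_j}.
\]
Extracting the coefficient of $x^i$ on both sides gives exactly
\[
(\tau_2)_*(\Psi_\infty^i\cap[\bM_\Gamma^\sim(D)]^{\vir})=\sum_{k_1,\dots,k_{\rho_\infty}\geq 0}\left(\prod_{j=1}^{\rho_\infty}p_j^{k_j}\right)\left[(\tau_1)_*\left(r^{i+1-\sum k_j}c_{i-\sum k_j+g}(-R^*\pi_*\mathcal L_r)\cap[\bM_{g,\vec a}(\cD_0)]^{\vir}\right)\right]_{r^0},
\]
where terms with $i-\sum k_j<0$ vanish because $c_{<0}=0$ (so the sum is finite), and the statement's $p_j'=\nu_j\tau_1^*\psi_j-\ev_j^*c_1(L)$ agrees with $p_j$ after the pushforward identity $\tau_1^*\psi_j$ versus $\tau_2^*\psi_j$ — both descend from the same $\psi$-class on $\bM_{g,n+\rho_0+\rho_\infty}(D,\beta)$, and the $p_j$-factors can be pulled outside the respective pushforwards by the projection formula. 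One subtlety I would be careful about: $p_i$ in \eqref{eqn:p} is written with $\tau_2^*\psi_i$ while the corollary uses $p_j'$ with $\tau_1^*\psi_j$; since in Corollary~\ref{cor:HHI} the $\sigma_l$ are capped against the rubber class under $(\tau_2)_*$, the projection formula lets me move them to $\bM_{g,n+\rho_0+\rho_\infty}(D,\beta)$ as the classes $\nu_j\psi_j-\ev_j^*c_1(L)$, and then pull them back along $\tau_1$ on the other side of the identity — this is where $p_j$ turns into $p_j'$.

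The main obstacle is not conceptual but one of careful matching: making sure the expansion $\prod(1-p_jx)=\sum(-1)^l\sigma_lx^l$ is applied with the right sign convention (the $\sigma_l$ in the paper are built from $p_i=\nu_i\psi_i-\ev_i^*c_1(L)$, so $p_i$ here is the negative of the $p_i$ appearing inside $c(l)$ earlier in \eqref{neg-rel-0}, and I must track which convention Corollary~\ref{cor:HHI} actually uses), and confirming that the index shift $i\leftrightarrow i-g$ in $c_{i-\sum k_j+g}$ is consistent on both sides. I would also note that, since all the identities hold in $A_*(\bM_{g,n+\rho_0+\rho_\infty}(D,\beta))$ after taking $[\cdot]_{r^0}$ and the series are truncating (finitely many nonzero terms in each degree), no convergence issue arises and the generating-function manipulation is legitimate. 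With these conventions pinned down, the corollary follows immediately from Corollary~\ref{cor:HHI} by the formal inversion above.
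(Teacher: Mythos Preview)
Your approach is correct and is essentially the paper's intended argument: the corollary is simply the inversion of the identity in Theorem~\ref{thm:appdx} (equivalently, of Corollary~\ref{cor:HHI} after repackaging into a generating series), obtained by dividing both sides by $\prod_{j}(1-p_j/t)$ and extracting the coefficient of $t^{-i-1}$. Your generating-function bookkeeping with $S(x)$ and $H(x)$ is exactly this, with $x$ playing the role of $1/t$.

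One small correction to your finiteness/vanishing justification: you write that terms with $i-\sum k_j<0$ vanish ``because $c_{<0}=0$'', but the Chern class appearing is $c_{i-\sum k_j+g}$, which for $-g\le i-\sum k_j<0$ has nonnegative index. The vanishing of those terms after taking $[\cdot]_{r^0}$ is instead \eqref{eqn:appvanish} (a consequence of Corollary~\ref{cor:cycrel}). This is precisely the mechanism the paper invokes in the remark following the corollary when specializing to $i=0$. With that citation in place your argument is complete; the $p_j\leftrightarrow p_j'$ passage via the projection formula is handled correctly.
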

If we take $i=0$, the right-hand side only involves $c_{g-\sum k_j}(-R^*\pi_*\mathcal L_r)$. By \eqref{eqn:appvanish}, only the summand with $c_g(-R^*\pi_*\mathcal L_r)$ might have nontrivial contribution. This in fact recovers the result in \cite{JPPZ18}*{Section 3.5.4} equation (25).

\subsection{Families of local models}
The rest of the section provides a proof of Theorem \ref{thm:appdx}. The idea is similar to \cite{FWY}*{Appendix}, but a lot of adjustments need to be made for higher genera.

We first simplify the notation and match them with \cite{FWY}*{Appendix}. Let $\bM^{\rel}(P,D_0)$ be the moduli of relative stable maps (corresponding to our $\bM_\Gamma(P,D_0)$), and $\bM^{\text{orb}}(P_{D_0,r})$ be the moduli of orbifold stable maps (corresponding to our $\bM_{\Gamma}(P_{D_0,r})$). There is the following diagram.

\begin{equation*}
\xymatrix{
\bM^{\rel}(P,D_0) \ar[rd]^{\Psi} & & \bM^{\text{orb}}(P_{D_0,r}) \ar[ld]_{\Phi}\\
& \bM_{g,n+\rho}(P,\beta).   & 
}
\end{equation*}

The following result is a slightly stronger version of \cite{TY}.
\begin{lem}\label{lem:appnoneq}
For $r\gg1$, the cycle 
\[\Phi_*([\bM^{\text{orb}}(P_{D_0,r}))\in A_*(\bM_{g,n+\rho}(P,\beta))\]
is a polynomial in $r$ and we have the following identity:
\[
\Psi_*\left([\bM^{\on{rel}}(P,D_0)]^{\on{vir}}\right) = \bigg[ \Phi_*([\bM^{\text{orb}}(P_{D_0,r})]^{\on{vir}}) \bigg]_{r^0}.
\]
\end{lem}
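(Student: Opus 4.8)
\textbf{Proof plan for Lemma \ref{lem:appnoneq}.}

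The plan is to reduce this statement to the already-known polynomiality result of \cite{TY} together with a comparison of virtual classes under the two forgetful maps $\Psi$ and $\Phi$ to $\bM_{g,n+\rho}(P,\beta)$. The key point is that $P = \Pp_D(L\oplus\sO)$ is a very special target: it admits a fiberwise $\GM$-action, and both moduli spaces $\bM^{\rel}(P,D_0)$ and $\bM^{\on{orb}}(P_{D_0,r})$ carry induced $\GM$-actions for which $\Psi$ and $\Phi$ are equivariant (with the trivial action on the base, since the $\GM$-action on $P$ covers a trivial action on $\bM_{g,n+\rho}(P,\beta)$ after stabilization — or, more precisely, we work with the $\GM$-fixed data). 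First I would invoke \cite{TY} to get that $\Phi_*([\bM^{\on{orb}}(P_{D_0,r})]^{\on{vir}})$ is a polynomial in $r$ for $r\gg 1$; the slight strengthening here is only that we track the pushforward as a \emph{cycle class} in $A_*(\bM_{g,n+\rho}(P,\beta))$ rather than as numerical invariants, but the argument of \cite{TY} is already at the level of cycles (it proceeds via virtual localization on $P_{D_0,r}$ and the degeneration formula), so this is essentially a matter of bookkeeping.

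Next I would identify the constant term. The natural route is to apply the degeneration-to-the-normal-cone of $D_0$ inside $P$ (equivalently, use that $P$ itself degenerates, or simply run virtual localization with respect to the fiberwise $\GM$-action on $P$). On the relative side, the rigidification of the target produces the rubber moduli and $\bM^{\rel}(P,D_0)$ decomposes by distributing components between a rigid $P$-part and a rubber part over $D_0$; on the orbifold side, virtual localization on $P_{D_0,r}$ gives exactly the fixed-locus contributions $C_0(v_i)$ and $C_\infty$ recorded in the proof of Theorem \ref{thm:rel-orb}, with the $r$-dependence concentrated in the root-gerbe factors $c_j(-R^*\pi_*\mathcal L_r)$. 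Taking $[\,\cdot\,]_{r^0}$ and using Corollary \ref{cor:HHI} (the Hurwitz--Hodge/rubber comparison, applied to each vertex $v_i$ over the zero section) matches the constant term of the orbifold localization sum with the corresponding relative/rubber localization sum, which reassembles to $\Psi_*([\bM^{\rel}(P,D_0)]^{\on{vir}})$. In fact, since $D_0$ already sits inside $P$ with normal bundle $L$, the cleanest version avoids a second degeneration: one applies $\GM$-localization directly and observes that the non-rigid (rubber) contributions on both sides are literally the same, so only the vertex contributions over $\cD_0$ (resp.\ $D_0$) need comparison, and that is precisely Corollary \ref{cor:HHI} with $t$ specialized appropriately.

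The main obstacle I expect is the careful matching of the localization graphs and their combinatorial/automorphism factors between the orbifold side and the relative side — ensuring that edge contributions ($d_e$, $\ev_e^*c_1(L)$, $\bar\psi_e$ denominators) line up, that the virtual normal bundle factors $C_\infty = \prod d_e/(-t-\Psi_0)$ on the orbifold side correspond to the rubber $\psi$-class factors on the relative side, and that the power of $r$ in the prefactor (here there is no $r^{\rho_-}$ because $\rho_-$ for the $P$-target with relativity only along $D_0$ is accounted for inside the vertex contributions) is bookkept consistently. A secondary subtlety is that Corollary \ref{cor:HHI} is stated for a single rubber moduli $\bM^\sim_\Gamma(D)$ with a fixed topological type, so to apply it vertex-by-vertex I would first need to push forward each factor in the fiber-product structure of the fixed loci and then reassemble; this is routine given the projection formula and the compatibility of virtual classes with the fiber-product gluing already used in Section \ref{sec:rel-inv}, but it is where most of the technical care goes. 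Once these identifications are in place, the equality of constant terms is immediate from Theorem \ref{thm:appdx}/Corollary \ref{cor:HHI}.
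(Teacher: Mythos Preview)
Your proposal contains a circularity: you invoke Corollary~\ref{cor:HHI} (and implicitly Theorem~\ref{thm:appdx}) to match the vertex contributions over $\cD_0$ with rubber contributions, but in the paper's logical architecture those results are \emph{consequences} of Lemma~\ref{lem:appeq}, which is the equivariant strengthening of the very lemma you are trying to prove. Indeed, the whole point of Section~\ref{sec:HH-DR} is to establish Lemma~\ref{lem:appnoneq} (and its family and equivariant versions) first, and only then to extract Theorem~\ref{thm:appdx} by comparing localization residues. So the step ``use Corollary~\ref{cor:HHI} vertex-by-vertex'' is not available to you here.

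The paper's actual argument is both non-circular and simpler than what you outline. After the degeneration step (which you correctly identify), one is reduced to comparing the pushforwards of $[\bM^{\bullet\rel}(P,D_0\cup D_\infty)]^{\vir}$ and $[\bM^{\bullet\text{orb,rel}}(P_{D_0,r},D_\infty)]^{\vir}$ to $\bM^{\bullet}_{g,n+\rho_0+\rho_\infty}(D,\beta)$. The key observation (Lemma~\ref{lem:appdxloc}) is that one does \emph{not} need to match stable vertex contributions at all: on both sides, every stable vertex contributes only strictly negative powers of $t$, hence vanishes in the non-equivariant limit. On the relative side this is immediate from the $1/(\pm t-\psi)$ factors; on the orbifold side the required input is only the polynomiality $r^{2i-2g+1}(\tau_1)_*c_i(-R^*\pi_*\mathcal L_r)\in\text{poly}(r)$ from \cite{JPPZ18}*{Corollary 10} together with the degree-$r$ compatibility of virtual classes, which forces the $r^{\leq 0}$-part of the vertex contribution to involve only negative powers of $t$. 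Thus both pushforwards are supported on the locus where $\bM^{\bullet}_{g,n+\rho_0+\rho_\infty}(D,\beta)$ is unstable, where they coincide trivially. Your plan to match the localization residues term-by-term via a Hurwitz--Hodge/rubber identity is precisely what the paper does \emph{afterwards}, to deduce Theorem~\ref{thm:appdx} from the lemma --- not the other way around.
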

Notice there is a $\GM$-action on $P_{D_0,r}$ which is compatible with the scaling action on fiber on $P$. What we are aiming for is in fact the following equivariant version of the lemma.
\begin{lem}\label{lem:appeq}
For $r\gg 1$, the cycle
\[\Phi_*\left([\bM^{\on{orb}}(P_{D_0,r})]^{\on{vir},\on{eq}}\right)\in A_*^{\C^*}(\bM_{g,n+\rho}(P,\beta))  \]
is a polynomial in $r$ and we have the following identity:
\[
\Psi_*\left([\bM^{\on{rel}}(P,D_0)]^{\on{vir},\on{eq}}\right) = \bigg[ \Phi_*\left([\bM^{\on{orb}}(P_{D_0,r})]^{\on{vir},\on{eq}}\right) \bigg]_{r^0}.
\]
\end{lem}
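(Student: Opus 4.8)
The plan is to deduce Lemma~\ref{lem:appeq} from its non-equivariant counterpart Lemma~\ref{lem:appnoneq} by a localization argument with respect to the fiberwise scaling $\GM$-action on $P=\Pp_D(L\oplus\sO)$, lifted to the root stack $P_{D_0,r}$. The key point is that all the maps in the diagram are $\GM$-equivariant, so every class in sight admits an equivariant lift, and the two sides of the desired identity are equivariant lifts of the two sides of Lemma~\ref{lem:appnoneq}. Thus what really needs to be checked is (i) that $\Phi_*[\bM^{\on{orb}}(P_{D_0,r})]^{\vir,\eq}$ is polynomial in $r$, and (ii) that its $r^0$-coefficient equals $\Psi_*[\bM^{\on{rel}}(P,D_0)]^{\vir,\eq}$; both statements live in $A^{\GM}_*(\bM_{g,n+\rho}(P,\beta))$.

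First I would recall that $A^{\GM}_*(\bM_{g,n+\rho}(P,\beta))$ is a module over $A^*_{\GM}(\mathrm{pt})=\Q[\lambda]$, and that after inverting $\lambda$ the localization theorem expresses any equivariant class as a sum of contributions supported on the $\GM$-fixed loci. The $\GM$-fixed loci of $\bM_{g,n+\rho}(P,\beta)$ are indexed by the two sections $D_0,D_\infty\subset P$ together with graph data recording how the domain distributes between them; crucially, these fixed loci, and the virtual normal bundle contributions, are the \emph{same} for $P$ and for $P_{D_0,r}$ away from the part sitting over $D_0$, where the root-stack structure enters. The non-equivariant Lemma~\ref{lem:appnoneq} already handles the ``naive'' part of the comparison; the localization refinement says the identity of Lemma~\ref{lem:appnoneq} can be upgraded to each fixed-point contribution separately once we keep track of the equivariant parameter. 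Concretely, I would run the same degeneration-to-the-normal-cone plus virtual localization computation that proves Lemma~\ref{lem:appnoneq} (and that is used in the proof of Theorem~\ref{thm:rel-orb}), but now decorate every term with its equivariant weight, observe that the weight factors are independent of $r$, and conclude that polynomiality in $r$ and the value of the constant term are inherited from the non-equivariant statement factor by factor.

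The step I expect to be the main obstacle is controlling the $\GM$-equivariant contribution of the part of the fixed locus lying over $D_0$ in $P_{D_0,r}$: this is where the universal $r$th root bundle $L_r$ on the root gerbe $\cD_0$ appears, and where the class $-R^*\pi_*\mathcal L_r$ and its Chern classes $c_i$ enter. One must check that the equivariant edge and vertex contributions factor so that the $r$-dependence is exactly as in the non-equivariant case — i.e. that introducing the equivariant parameter $\lambda$ does not spoil the polynomiality established via \cite{JPPZ18} and Corollary~\ref{cor:cycrel}. The cleanest way around this is to note that the $\GM$-action is trivial on the base $D$ (it only scales the $P^1$-fiber), hence trivial on $\cD_0$ and on $L_r$ as a bundle on $\cD_0$; therefore $-R^*\pi_*\mathcal L_r$ carries the trivial equivariant structure and its equivariant Chern classes agree with the ordinary ones. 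Consequently the only place $\lambda$ appears is in the smoothing/node factors and in the tangent weights along the two sections, all of which are $r$-independent. Granting this, the equivariant identity follows from the non-equivariant one by matching localization contributions, and both the polynomiality and the constant-term statements of Lemma~\ref{lem:appeq} drop out.

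One remaining technical caveat to address: after localization one works over $\Q[\lambda,\lambda^{-1}]$, but the classes $\Phi_*[\bM^{\on{orb}}(P_{D_0,r})]^{\vir,\eq}$ and $\Psi_*[\bM^{\on{rel}}(P,D_0)]^{\vir,\eq}$ are genuinely defined in $A^{\GM}_*$ before inverting $\lambda$; so after establishing the identity over $\Q[\lambda,\lambda^{-1}]$ one invokes the injectivity of $A^{\GM}_*\to A^{\GM}_*[\lambda^{-1}]$ modulo $\lambda$-torsion, or more simply notes that both sides are already honest equivariant classes whose images agree after inverting $\lambda$, and that the relevant module is $\lambda$-torsion free in the degrees that matter, to conclude equality in $A^{\GM}_*(\bM_{g,n+\rho}(P,\beta))$ itself. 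This finishes the proof of Lemma~\ref{lem:appeq}.
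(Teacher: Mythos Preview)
Your approach differs from the paper's, and it contains a real gap.

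The paper does \emph{not} argue by localizing the equivariant classes directly. Instead it first proves a family version (Lemma~\ref{lem:appfamily}): for any $\GM$-torsor $E\to B$ over a smooth base, the non-equivariant identity holds for the $\pi$-relative virtual classes of $\bM^{\rel}(P\times_{\GM}E,D_0\times_{\GM}E)$ and $\bM^{\text{orb}}(P_{D_0,r}\times_{\GM}E)$. Then it invokes the Edidin--Graham model of equivariant Chow groups, taking $E=\C^N\setminus\{0\}$, $B=\Pp^{N-1}$; under \eqref{eqn:eqchow} the $\pi$-relative virtual class is identified with the equivariant virtual class, and Lemma~\ref{lem:appeq} follows. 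This strategy never inverts the equivariant parameter and never needs any torsion-freeness of $A^{\GM}_*$.

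The gap in your proposal is the claim that ``the $\GM$-action is trivial on $\cD_0$ and on $L_r$ as a bundle on $\cD_0$, therefore $-R^*\pi_*\mathcal L_r$ carries the trivial equivariant structure.'' The action on the underlying variety $D$ is trivial, but $L_r$ is the $r$th root of the \emph{normal bundle} of $D_0$ in $P$, and the fiberwise scaling action has weight $1$ on that normal direction. Hence $L_r$ has equivariant weight $1/r$ (this is exactly why the vertex contribution over $\cD_0$ in the paper's localization formula carries the factor $(t/r)^{\g(i)-1+|E_i|-j+\rho_-(i)}$). Thus the equivariant Chern classes of $-R^*\pi_*\mathcal L_r$ genuinely mix $\lambda$ and $r$, and your assertion that ``the only place $\lambda$ appears is in the smoothing/node factors and tangent weights along the two sections, all of which are $r$-independent'' is false. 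Once $\lambda$ and $r$ are entangled in this way, you cannot simply inherit polynomiality in $r$ from the non-equivariant statement ``factor by factor''; one really needs the polynomiality input from \cite{JPPZ18} applied in the presence of the equivariant parameter, which is precisely what the family argument supplies. In addition, the final step of descending from $A^{\GM}_*[\lambda^{-1}]$ back to $A^{\GM}_*$ via torsion-freeness is not justified for these moduli spaces and is avoided entirely by the paper's approach.
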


Unlike \cite{FWY}*{Appendix}, Lemma \ref{lem:appnoneq} does not imply Lemma \ref{lem:appeq} directly when $g>0$. Instead, we need to extend Lemma \ref{lem:appnoneq} to a family over a base. Let $\pi:E\rightarrow B$ be a smooth morphism between two smooth algebraic varieties. Furthermore, suppose that $E$ is also a $\C^*$-torsor over $B$. The previous diagram can be modified into a family over $B$.
\begin{equation*}
\xymatrix{
\bM^{\rel}(P\times_{\C^*}E,D_0\times_{\C^*}E) \ar[rd]^{\Psi_{E}} & & \bM^{\text{orb}}(P_{D_0,r}\times_{\C^*}E) \ar[ld]_{\Phi_{E}}\\
& \bM_{g,n+\rho}(P\times_{\C^*}E,\beta),  & 
}
\end{equation*}
where $P\times_{\C^*}E = (P\times E)/\C^*$ with $\C^*$ acts on both factors, and the curve class $\beta$ is a fiber class (projects to $0$ on $B$). Purely as moduli spaces, these moduli spaces can be explicitly described as follows.
\[
\bM^{\rel}(P\times_{\C^*}E,D_0\times_{\C^*}E) \cong \bM^{\rel}(P,D_0)\times_{\C^*}E,
\]
\[
\bM^{\text{orb}}(P_{D_0,r}\times_{\C^*}E) \cong \bM^{\text{orb}}(P_{D_0,r})\times_{\C^*}E,
\]
\[
\bM_{g,n+\rho}(P\times_{\C^*}E,\beta) \cong \bM_{g,n+\rho}(P,\beta) \times_{\C^*} E.
\]
But they admit relative perfect obstruction theories over $B$ and thus forming virtual cycles relative to the base $B$ (denoted by $[\ldots]^{\vir_{\pi}}$).

\begin{lem}\label{lem:appfamily}
For $r\gg 1$, the cycle 
\[(\Phi_E)_*\left([\bM^{\on{orb}}(P_{D_0,r})\times_{\C^*}E]^{\on{vir}_{\pi}}\right)\in A_*(\bM_{g,n+\rho}(P\times_{\C^*}E,\beta))\]
is a polynomial in $r$ and we have the following identity:
\[
(\Psi_E)_*\left([\bM^{\on{rel}}(P\times_{\C^*}E,D_0\times_{\C^*}E)]^{\on{vir}_{\pi}}\right) = \bigg[ (\Phi_E)_*\left([\bM^{\on{orb}}(P_{D_0,r})\times_{\C^*}E]^{\on{vir}_{\pi}}\right) \bigg]_{r^0}.
\]
\end{lem}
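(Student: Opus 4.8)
The plan is to deduce Lemma \ref{lem:appfamily} from Lemma \ref{lem:appnoneq} by a specialization/deformation argument, exploiting the fact that the $B$-relative virtual cycles over each fiber of $\pi:E\to B$ restrict to the ordinary (absolute) virtual cycles appearing in Lemma \ref{lem:appnoneq}. First I would set up the comparison: since $E\to B$ is smooth and $E$ is a $\C^*$-torsor, the three moduli spaces fit into the product descriptions displayed above, and the relative perfect obstruction theories over $B$ are pulled back, fiberwise, from the absolute perfect obstruction theories on $\bM^{\rel}(P,D_0)$, $\bM^{\on{orb}}(P_{D_0,r})$, and $\bM_{g,n+\rho}(P,\beta)$ respectively. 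Concretely, for a point $b\in B$ the fiber of, say, $\bM^{\on{orb}}(P_{D_0,r})\times_{\C^*}E\to B$ over $b$ is (noncanonically, depending on a trivialization of $E$ near $b$) identified with $\bM^{\on{orb}}(P_{D_0,r})$, and under this identification $[\bM^{\on{orb}}(P_{D_0,r})\times_{\C^*}E]^{\vir_\pi}$ restricts to $[\bM^{\on{orb}}(P_{D_0,r})]^{\vir}$. The same holds for the relative model and for the space of stable maps, and the forgetful maps $\Phi_E,\Psi_E$ restrict fiberwise to $\Phi,\Psi$.

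The key steps, in order, are: (1) Record the fiberwise restriction statement for virtual cycles, using compatibility of refined Gysin maps with the construction of relative virtual classes (this is where the smoothness of $\pi$ and the $\C^*$-torsor structure enter — a local trivialization $E|_U\cong U\times\C^*$ induces $P_{D_0,r}\times_{\C^*}E|_U\cong U\times P_{D_0,r}$ and likewise for the moduli, compatibly with obstruction theories). (2) Apply Lemma \ref{lem:appnoneq}, which gives, fiber by fiber, that $\Phi_*([\bM^{\on{orb}}(P_{D_0,r})]^{\vir})$ is a polynomial in $r$ with constant term $\Psi_*([\bM^{\on{rel}}(P,D_0)]^{\vir})$. (3) Upgrade this to a statement about the family: because $A_*(\bM_{g,n+\rho}(P\times_{\C^*}E,\beta))\to A_*$ of a fiber is computed by restriction, and because both $(\Phi_E)_*([\ldots]^{\vir_\pi})$ and the candidate constant term are honest Chow classes on the total space whose fiberwise restrictions agree for all $b$, one concludes the identity upstairs. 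The cleanest way to phrase (3) is: for each fixed $r$, both sides of the putative identity are classes on $\bM_{g,n+\rho}(P\times_{\C^*}E,\beta)$; since $B$ is a smooth variety and the total space is, fiberwise over $B$, a product, one can argue either by a Leray–Hirsch / projective-bundle–type splitting of the Chow groups, or more robustly by noting that the polynomial-in-$r$ claim and the constant-term identity are statements about specific algebraic cycles that, after the local trivializations of step (1), become the $U\times(-)$ versions of Lemma \ref{lem:appnoneq} glued over an open cover of $B$.

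A technical point worth isolating is that $\bM_{g,n+\rho}(P\times_{\C^*}E,\beta)$ is only locally (over $B$) a product, so step (3) is not literally "multiply Lemma \ref{lem:appnoneq} by $[E]$"; rather one checks the two sides agree on an open cover $\{U_i\}$ of $B$ trivializing $E$, and that the local identities are compatible on overlaps (they differ only by the transition $\C^*$-action, under which all the relevant classes — the virtual cycles, $c_i(-R^*\pi_*\mathcal L_r)$, $\Psi_\infty$, the evaluation classes — are equivariant, so the identities glue). Equivalently, one can phrase everything $\C^*$-equivariantly on $P\times E$ and descend. I expect the main obstacle to be exactly this gluing/descent bookkeeping: verifying that the construction of the relative-over-$B$ virtual class really does restrict to the absolute virtual class on each fiber (compatibility of $\pi$-relative obstruction theories with base change to a point of $B$, and with the $\C^*$-quotient), and that the polynomiality in $r$ — an a priori fiberwise statement — propagates to a genuine polynomial identity of cycle classes on the total space. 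Once the fiberwise-restriction dictionary is firmly in place, Lemma \ref{lem:appnoneq} does all the arithmetic work and the family version follows formally.
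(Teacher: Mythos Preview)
Your route differs from the paper's, and step (3) contains a genuine gap, not just bookkeeping. The paper does not deduce the family statement from the absolute one by restriction to fibers; instead it reruns the entire degeneration-plus-localization argument directly on the family. The key observation is that $P\times_{\C^*}E$ carries a \emph{residual} fiberwise $\C^*$-action (scaling on the $P$ factor alone, independent of the torsor action used to form the quotient), with fixed loci $D_0\times_{\C^*}E\cong D\times B$ and $D_\infty\times_{\C^*}E\cong D\times B$. Virtual localization, the degeneration formula, and the polynomiality input from \cite{JPPZ18}*{Corollary 10} (now applied with target $D\times B$ rather than $D$) all hold at the level of cycles on the total space, so the comparison is carried out globally from the start and no gluing is ever needed.

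Your gluing argument fails because Chow groups do not satisfy a sheaf property in the Zariski topology: two classes that agree on every member of an open cover need not agree globally. For a concrete obstruction, take $B=\Pp^1$ with the standard affine cover $U_1,U_2$ and any $Y$; the class $[\{0\}\times Z]\in A_*(\Pp^1\times Y)$ restricts to zero in both $A_*(U_i\times Y)$ (on $\A^1\times Y$ it is the divisor of the coordinate function on $\A^1\times Z$, hence rationally trivial), yet it is nonzero on $\Pp^1\times Y$ whenever $[Z]\neq 0$. So even after you establish the fiberwise dictionary in step (1) and invoke Lemma \ref{lem:appnoneq} over each trivializing open in step (2), the local identities do not determine the global cycle, and the compatibility on overlaps you describe does not help. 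The Leray--Hirsch alternative would require a K\"unneth-type splitting of the Chow groups of these twisted moduli stacks over $B$, which is not available in general. You correctly flagged this spot as the main obstacle, but it is an actual obstruction rather than bookkeeping; the paper's direct family-localization approach is what circumvents it.
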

The proof of all three lemmas will be given in the next two subsections. The idea is the following. First of all, since the method in \cite{TY} is localization formula which holds on the Chow level, one can adapt their proof into Lemma \ref{lem:appnoneq} without too much trouble. On the other hand, the fibration $P\times_{\C^*}E$ admits a fiberwise $\C^*$-action by letting $\C^*$ act on the first factor. Localization analysis thus applies to Lemma \ref{lem:appfamily}, and every procedure in the proof of Lemma \ref{lem:appnoneq} works out in parallel. In the end, Lemma \ref{lem:appfamily} implies Lemma \ref{lem:appeq} because equivariant theory is a limit of nonequivariant theories over families. Or in a slightly different language, Lemma \ref{lem:appeq} is Lemma \ref{lem:appfamily} with $E=E\C^*$, $B=B\C^*$ (the classifying space of $\C^*$).

\subsection{Localization on (families of) local models}
The goal is to adjust the arguments in \cite{TY} into cycle forms like Lemma \ref{lem:appnoneq}, and check whether it works on families of local models over a base. We outline the adjusted argument but omit details of some standard procedures (some details see \cites{TY18,TY,JPPZ,JPPZ18}). This adjusted argument in fact simplifies the argument of \cite{TY}.

\subsubsection*{Step $1$, degenerate the target.} We are comparing $\bM^{\rel}(P,D_0)$ and $\bM^{\text{orb}}(P_{D_0,r})$. However, for computational purposes, we apply deformation to the normal cones to divisors $D_0, \cD_0$, respectively. $P$ degenerates into two copies of $P$ with $D_0$ and $D_\infty$ glued together. By degeneration formula, $[\bM^{\rel}(P,D_0)]^{\vir}$ can be written in terms of suitable pushforward of $[\bM^{\bullet\rel}(P,D_0)]^{\vir}$ and $[\bM^{\bullet\rel}(P,D_0\cup D_\infty)]^{\vir}$. On the other hand, $[\bM^{\text{orb}}(P_{D_0,r})]^{\vir}$ can be written in terms of suitable pushforward of $[\bM^{\bullet\rel}(P,D_0)]^{\vir}$ and $[\bM^{\bullet\text{orb,rel}}(P_{D_0,r},D_\infty)]^{\vir}$. Here $\bM^{\bullet\text{orb,rel}}(P_{D_0,r},D_\infty)$ has orbifold structures along $D_0$ as well as tangency conditions along $D_\infty$ (called \emph{orbifold/relative} theory of $P_{D_0,r}$), and $\bullet$ means possibly disconnected domains. We omit the details of degeneration formulas. But schematically, the two formulas can be summarized as follows.
\[
  [\bM^{\rel}(P,D_0)]^{\vir}\leadsto [\bM^{\bullet\rel}(P,D_0)]^{\vir}\text{ and }[\bM^{\bullet\rel}(P,D_0\cup D_\infty)]^{\vir} .
\]
 \[
[\bM^{\text{orb}}(P_{D_0,r})]^{\vir} \leadsto [\bM^{\bullet\rel}(P,D_0)]^{\vir}\text{ and }[\bM^{\bullet\text{orb,rel}}(P_{D_0,r},D_\infty)]^{\vir}.
 \]

The two sums in the degeneration formula range over the same set of intersection profiles along $D$, and thus can be matched term-by-term. In the end, it suffices to compare the pushforward of virtual cycles $[\bM^{\bullet\rel}(P,D_0\cup D_\infty)]^{\vir}, [\bM^{\bullet\text{orb,rel}}(P_{D_0,r},D_\infty)]^{\vir}$ with matching profiles along $D_0, D_\infty$. Note that {\bf this step works on families of $P$ over a base $B$ without problems.}

\subsubsection*{Step $2$, localization formula.} We now focus on the following diagram\footnote{One potential confusing point is that we choose to pushforward to $\bM^{\bullet}_{g,n+\rho_0+\rho_\infty}(D,\beta)$ instead of $\bM^{\bullet}_{g,n+\rho_0+\rho_\infty}(P,\beta)$. In the deformation to the normal cone, the pairs $(P,D_0\cup D_\infty)$ and $(P_{D_0,r},D_\infty)$ are in fact coming out of exceptional divisors. When writing degeneration formula, the corresponding cycles should be projected to moduli of stable maps to $D$ first and then embedded into $P$.}:
\begin{equation*}
\xymatrix{
\bM^{\bullet\rel}(P,D_0\cup D_\infty) \ar[rd]^{\Psi'} & & \bM^{\bullet\text{orb,rel}}(P_{D_0,r},D_\infty) \ar[ld]_{\Phi'}\\
& \bM^{\bullet}_{g,n+\rho_0+\rho_\infty}(D,\beta) & 
}
\end{equation*}
$\bM^{\bullet}_{g,n+\rho_0+\rho_\infty}(D,\beta)$ can be written as a product of moduli spaces with connected domain curves. In the product, it is possible that some of the factors are unstable, that is, genus zero, two markings and curve class zero. Then we simply set it to be $D$. We call $\bM^{\bullet}_{g,n+\rho_0+\rho_\infty}(D,\beta)$   \emph{unstable}, if all the factors are unstable.

We aim to prove the following.
\begin{lem}\label{lem:appdxloc}
For $r\gg 1$, the cycle 
\[\Phi'_*\left([\bM^{\bullet\on{orb,rel}}(P_{D_0,r},D_\infty)]^{\on{vir}}\right)\in A_*(\bM^{\bullet}_{g,n+\rho_0+\rho_\infty}(D,\beta)) \]
is a polynomial in $r$ and we have the following identity:
\[
\Psi'_*\left([\bM^{\bullet\on{rel}}(P,D_0\cup D_\infty)]^{\on{vir}}\right) = \bigg[  \Phi'_*\left([\bM^{\bullet\on{orb,rel}}(P_{D_0,r},D_\infty)]^{\on{vir}}\right)  \bigg]_{r^0}.
\]
In fact, it is nonzero if and only if $\bM^{\bullet}_{g,n+\rho_0+\rho_\infty}(D,\beta)$ is unstable.
\end{lem}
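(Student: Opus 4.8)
The plan is to prove Lemma~\ref{lem:appdxloc} by equivariant localization with respect to the fiberwise $\GM$-action on $P=\Pp_D(L\oplus\sO)$, which scales the fibers and has fixed locus exactly $D_0\sqcup D_\infty$. This action lifts canonically to the root stack $P_{D_0,r}$ and hence to both $\bM^{\bullet\on{rel}}(P,D_0\cup D_\infty)$ and $\bM^{\bullet\on{orb,rel}}(P_{D_0,r},D_\infty)$; meanwhile $\bM^{\bullet}_{g,n+\rho_0+\rho_\infty}(D,\beta)$ carries the trivial action and $\Psi',\Phi'$ are equivariant. First I would apply the virtual localization formula to $[\bM^{\bullet\on{rel}}(P,D_0\cup D_\infty)]^{\on{vir}}$ and to $[\bM^{\bullet\on{orb,rel}}(P_{D_0,r},D_\infty)]^{\on{vir}}$ and push the resulting fixed-point expansions forward along $\Psi'$ and $\Phi'$. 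In both cases the fixed loci are indexed by the same set of localization graphs: each graph has vertices lying over $D_0$ (over the gerbe $\cD_0$ in the orbifold case) or over $D_\infty$, joined by chains of multiple covers of fibers of $P\to D$, and the associated fixed loci are built from rubber moduli over $D_0$, rubber moduli over $D_\infty$, and products of $D$ indexed by the fiber-cover edges. Because the localization is carried out fiberwise, the identical argument applies verbatim after replacing $P$ by a family $P\times_{\GM}E$ over a smooth base $B$, which is what Lemma~\ref{lem:appfamily} requires.

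Next I would compare the two graph sums term by term, which is legitimate since they are indexed identically. The contribution of each $D_\infty$-vertex and of each edge is computed from a neighborhood of $P$ disjoint from $D_0$, and this neighborhood is the same for $P$ and for $P_{D_0,r}$; hence these factors coincide for the relative and orbifold models, apart from the elementary $\Z/r$ automorphism factors of orbifold fiber covers meeting $\cD_0$, whose $r$-dependence is explicit and bounded. Consequently all genuine $r$-dependence is concentrated in the $D_0$-vertices. On the relative side a $D_0$-vertex contributes a rubber integral over a moduli of relative stable maps to the rubber over $D_0\cong D$, together with the $\frac{1}{t-\Psi_\infty}$-type factor from the vertical $\GM$-weight; on the orbifold side it contributes $\sum_i (t/r)^{\g-i-1}c_i(-R^*\pi_*\mathcal L_r)$ capped with $[\bM_{g,\vec a}(\cD_0,\beta)]^{\on{vir}}$, pushed to $\bM^{\bullet}_{g,n+\rho_0+\rho_\infty}(D,\beta)$. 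This is precisely the local model analyzed in \cite{TY}. Since their method is a localization computation it already takes place at the level of Chow cycles, so it upgrades the numerical identity of \cite{TY} to the cycle identity asserted here; and because it is phrased through $c_i(-R^*\pi_*\mathcal L_r)$ rather than through the ages themselves, the argument is uniform in $\vec\mu,\vec\nu$ and therefore removes the small-age hypothesis of \cite{TY}. Polynomiality in $r$ of each orbifold $D_0$-contribution follows from \cite{JPPZ18}, and matching the $r^0$-coefficient of each orbifold graph term with the corresponding relative graph term yields both the polynomiality statement and the equality $\Psi'_*([\bM^{\bullet\on{rel}}(P,D_0\cup D_\infty)]^{\on{vir}})=[\Phi'_*([\bM^{\bullet\on{orb,rel}}(P_{D_0,r},D_\infty)]^{\on{vir}})]_{r^0}$.

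For the final assertion — that both sides are nonzero exactly when $\bM^{\bullet}_{g,n+\rho_0+\rho_\infty}(D,\beta)$ is unstable — I would combine a virtual dimension count with the localization-graph bookkeeping. Since $D_0$ and $D_\infty$ are sections, the vertical log cotangent bundle is trivial and $T_P(-\log(D_0\cup D_\infty))\cong p^*T_D\oplus\sO_P$; tracking virtual dimensions and $\GM$-weights through the fixed-point expansion shows that a fixed locus contributes to the pushforward only when its image in $\bM^{\bullet}_{g,n+\rho_0+\rho_\infty}(D,\beta)$ carries no moduli, i.e. when every connected factor of the domain is a genus-zero component with exactly two special points collapsed onto a single fiber over a point of $D$ and with zero $D$-curve-class. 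This is exactly the unstable case, in which $\bM^{\bullet}_{g,n+\rho_0+\rho_\infty}(D,\beta)$ is a product of copies of $D$, and there the localization contribution is an explicit nonzero class; the same analysis applies to $\Phi'$. I expect the main obstacle to be this last step together with making the term-by-term comparison of $r$-dependence genuinely uniform over all localization graphs — in particular controlling the disconnected fixed loci and the orbifold fiber-cover contributions meeting $\cD_0$ — whereas the underlying localization computation is a routine adaptation of \cite{TY} to the Chow level.
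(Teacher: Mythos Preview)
Your localization framework is correct, and you correctly isolate the $r$-dependence in the $\cD_0$-vertices while the edge and $D_\infty$-vertex contributions match between the two theories. The gap is in your mechanism for the equality. You propose to match the orbifold $\cD_0$-vertex contribution $\sum_i (t/r)^{\g-i-1}c_i(-R^*\pi_*\mathcal L_r)\cap[\bM_{g,\vec a}(\cD_0)]^{\vir}$ against the relative rubber contribution $\frac{1}{t-\Psi_\infty}\cap[\bM_\Gamma^\sim(D)]^{\vir}$ term-by-term at $r^0$, saying this ``is precisely the local model analyzed in \cite{TY}.'' But that vertex-level cycle comparison is exactly the content of Theorem~\ref{thm:appdx}, and in the paper's logic Theorem~\ref{thm:appdx} is \emph{deduced from} Lemma~\ref{lem:appdxloc} (via Lemmas~\ref{lem:appfamily} and \ref{lem:appeq}) by identifying localization residues. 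Invoking it here is circular, and \cite{TY} does not furnish a ready-made cycle-level vertex identity you can quote.

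The paper's proof avoids any such matching by a much simpler observation: stable-vertex contributions vanish on both sides after the non-equivariant limit. On the relative side each stable vertex over $D_0$ or $D_\infty$ carries a factor $\frac{\prod_e d_e}{\pm t-\psi}$ and hence only strictly negative powers of $t$. On the orbifold side one combines the polynomiality $r^{2i-2g+1}\tau_*c_i(-R^*\pi_*\mathcal L_r)\in \on{poly}(r)$ from \cite{JPPZ18} with the pushforward compatibility $\tau_*[\bM_{\Gamma^0}(\cD_0)]^{\vir}=r^{2\g-1}[\bM(D)]^{\vir}$ to see that, for each $k\geq 0$, the $r^{-k}$-coefficient of a stable $\cD_0$-vertex contribution again involves only negative powers of $t$. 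Thus any localization graph containing a stable vertex contributes zero both to $\Psi'_*$ and to $[\Phi'_*]_{r^0}$, which simultaneously gives the polynomiality of $\Phi'_*$ and the equality. The only surviving graphs have all vertices unstable; this forces $\bM^{\bullet}_{g,n+\rho_0+\rho_\infty}(D,\beta)$ itself to be unstable, and in that case both sides are computed explicitly as $\frac{1}{\prod_i \mu_i}[D\times\cdots\times D]$. So the nonvanishing criterion falls out of the same $O(1/t)$ bookkeeping, and your separate virtual-dimension argument is unnecessary.
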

\begin{proof}[Proof of Lemma \ref{lem:appdxloc}]
Since $r$ is sufficiently large, localization formulas on both $\bM^{\bullet\rel}(P,D_0\cup D_\infty)$ and $\bM^{\bullet\text{orb,rel}}(P_{D_0,r},D_\infty)$ do not have edge contributions. Vertex contributions are compared as follows.
\begin{itemize}
\item Suppose that there is a stable vertex (target expands) $v$ in the localization of $\bM^{\bullet\rel}(P,D_0\cup D_\infty)$. If $v$ is a vertex over the zero section, it will contribute
\[
\dfrac{\prod_{e\in E(v)}d_e}{t-\psi}.
\]
If $v$ is a vertex over the $\infty$ section, it will contribute
\[\dfrac{\prod_{e\in E(v)}d_e}{-t-\psi}.\]
Here $e$ ranges over $E(v)$, the set of edges connecting to the vertex; and $\psi$ is the cotangent line class on the rubber at the corresponding boundary divisor; $d_e$ are degrees of the corresponding edges.

\item Suppose that there is a stable vertex $v$ in the localization of $\bM^{\bullet\text{orb,rel}}(P_{D_0,r},D_\infty)$. If $v$ is a vertex over the gerbe $\cD_0$, the vertex contribution is
\[
\prod\limits_{e\in E(v)} \dfrac{d_e}{1+\frac{\ev_e^*(L)-d_e\bar\psi_e}{t}}\sum\limits_{i\geq 0} \left( \dfrac{t}{r} \right)^{\g(v)-i-1} c_i(-R^*\pi_*\mathcal L_r).
\]
If $v$ is a vertex over $D_\infty$, the contribution is 
\[\dfrac{\prod_{e\in E(v)}d_e}{-t-\psi}.\]
\end{itemize}
As one can check, stable vertices in the relative theory of $(P,D_0\cup D_\infty)$ contribute $O(1/t)$ which do not affect the non-equivariant limits. In orbifold/relative theory of $(P_{D_0,r},D_\infty)$, we consider stable vertices over $\cD_0$. Let $v_j$ be such a vertex. We consider the natural forgetful map associated to $v_j$
\[\tau: \bM_{\Gamma^0_j}(\cD_0)\rightarrow \bM_{\g(j),n(j)+\rho(j)}(D,b(j)).\]
By \cite{JPPZ18}*{Corollary 10}, $r^{2i-2g+1}\tau_*c_i(-R^*\pi_*\mathcal L_r)$ is a polynomial in $r$. And by the compatibility of the virtual cycles (see \cite{AJT}*{Theorem 6.8}),
\[\tau_*\left([\bM_{\Gamma^0_j}(\cD_0)]^{\vir}\right)=r^{2\g(j)-1}[\bM_{\g(j),n(j)+\rho(j)}(D,b(j))]^{\vir}.\]
So one may check that for $k\geq 0$, the $r^{-k}$ coefficients of
\[\tau_*\left(\prod\limits_{e\in E(v)} \dfrac{d_e}{1+\frac{\ev_e^*(L)-d_e\bar\psi_e}{t}}\sum\limits_{i\geq 0} \left( \dfrac{t}{r} \right)^{\g(v)-i-1} c_i(-R^*\pi_*\mathcal L_r)\cap [\bM_{\Gamma^0_j}(\cD_0)]^{\vir}\right)\]
only involve of negative powers of $t$. The contribution over $D_{\infty}$ also contain negative powers of $t$. So after taking a non-equivariant limit, we may conclude that \[\Phi'_*\left([\bM^{\bullet\on{orb,rel}}(P_{D_0,r},D_\infty)]^{\on{vir}}\right)\]
is a polynomial in $r$.

Moreover, if $\bM^{\bullet}_{g,n+\rho_0+\rho_\infty}(D,\beta)$ is not unstable (see the paragraph before Lemma \ref{lem:appdxloc}), then each localization contribution on both moduli spaces must involve of stable vertices. So 
\[
\Psi'_*\left([\bM^{\bullet\on{rel}}(P,D_0\cup D_\infty)]^{\on{vir}}\right) = \bigg[  \Phi'_*\left([\bM^{\bullet\on{orb,rel}}(P_{D_0,r},D_\infty)]^{\on{vir}}\right)  \bigg]_{r^0}=0.
\]
If $\bM^{\bullet}_{g,n+\rho_0+\rho_\infty}(D,\beta)$ is unstable, then those contact orders satisfy $\mu_i=\nu_i$ for $0\leq i\leq \rho_0$ and
\begin{align*}
\Psi'_*\left([\bM^{\bullet\on{rel}}(P,D_0\cup D_\infty)]^{\on{vir}}\right)&=\Phi'_*\left([\bM^{\bullet\on{orb,rel}}(P_{D_0,r},D_\infty)]^{\on{vir}}\right)\\
&=\frac{1}{\prod_{i=1}^{\rho_0} \mu_i}[D\times D\cdots\times D].
\end{align*}
\end{proof}

Recall that $E\rightarrow B$ is a $\C^*$-torsor over a smooth base $B$. When we replace $P$ by $P\times_{\C^*} E$, the boundary divisors $D_0,D_\infty$ are changed into $D_0\times_{\C^*} E$ and $D_\infty\times_{\C^*}E$. Since $D_0,D_\infty$ are fixed under $\C^*$,
\[
D_0\times_{\C^*} E\cong D_\infty\times_{\C^*} E \cong D\times B.
\]
There is an extra $\C^*$-action on $P\times_{\C^*} E$ by only acting on the first factor. It's easy to check that fixed loci are $D_0\times_{\C^*} E$ and $D_\infty\times_{\C^*} E$. The use of relative perfect obstruction theory over $B$ causes vertices to correspond to moduli equipped with $\pi$-relative virtual classes including $[\bM(D_0\times_{\C^*}E)^\sim]^{\vir_\pi}, [\bM(\cD_0\times_{\C^*}E)]^{\vir_\pi}$, etc. But this has no effect on our analysis of ruling out stable vertices. The computation of edge contributions remains the same (tangent bundle of $B$ does not contribute to the moving part).

The only difference that might make one slightly cautious is whether we can apply \cite{JPPZ18}*{Corollary 10} (polynomiality) and the compatibility of the virtual cycles still holds.
In family case, this polynomiality is under the following set-up.
Let $L$ be a line bundle over $D\times B$ which is the pullback of a line bundle $L_D$ over $D$. Let $\sqrt[r]{D\times B/L}$ be the $r$th root gerbe of $D\times B$. Let $\mathcal L_r$ be the universal $r$th root over the universal curve on $\bM_{g,\vec{a}}(\sqrt[r]{D\times B/L},\beta))$ where $\vec{a}$ is a fixed age vector and $\beta$ is a fixed fiber class (pushforward to $0$ in $H_2(B,\Z)$). Let $\tau_1$ be the forgetful map 
\[
\tau_1:\bM_{g,\vec{a}}(\sqrt[r]{D\times B/L},\beta))\rightarrow \bM_{g,n+\rho_0+\rho_\infty}(D\times B,\beta).
\]
Note that 
\[\sqrt[r]{D\times B/L} \cong \sqrt[r]{D/L_D}\times B.\]
We want $r^{2i-2g+1}(\tau_1)_*c_i(-R^*\pi_*\mathcal L_r)$
to be a polynomial in $r$ in the Chow cohomology. This follows from \cite{JPPZ18}*{Corollary 10} by replacing $D$ with $D\times B$. The following compatibility of the virtual cycles
\[(\tau_1)_*\left([\bM_{g,\vec{a}}(\sqrt[r]{D\times B/L},\beta))]^{\vir_\pi}\right)=r^{2g-1}[\bM_{g,n+\rho_0+\rho_\infty}(D\times B,\beta)]^{\vir_\pi}\]
still holds because
\[
[\bM_{g,\vec{a}}(\sqrt[r]{D\times B/L},\beta))]^{\vir_\pi} = [\bM_{g,\vec{a}}(\sqrt[r]{D/L_D},\beta_D))]^{\vir}\times [B]
\]
and 
\[[\bM_{g,n+\rho_0+\rho_\infty}(D\times B,\beta)]^{\vir_\pi}=[\bM_{g,n+\rho_0+\rho_\infty}(D,\beta_D)]^{\vir}\times [B]\]
where $\beta_D$ is the pushforward of $\beta$ to the first factor of $D\times B$.



\begin{rmk}
In the argument of \cite{TY}, there are extensive discussions about insertions. But by establishing relations between virtual cycles, the discussions about insertions can be reduced. This simplifies the proof of \cite{TY}.
\end{rmk}

\subsection{Equivariant theory as a limit of non-equivariant theory}
In this subsection, we briefly explain how Lemma \ref{lem:appfamily} implies Lemma \ref{lem:appeq}.

According to \cite{EG}*{Section 2.2}, the $i$th equivariant Chow group of a space $X$ under an algebraic group $G$ can be defined as follows. Let $V$ be a $l$-dimensional representation of $G$ with $U\subset V$ an equivariant open set where $G$ acts freely and whose complement has codimension more than $\dim(X)-i$. Then define
\begin{equation}\label{eqn:eqchow}
A^G_i(X)=A_{i+l-g}((X\times U)/G),
\end{equation}
where $\dim(G)=g$. 
In our case, $G=\GM$. We simply choose $E=U=\C^N-\{0\}$. Now $(X\times E)/\GM$ is an $X$-fibration over $B=U/G\cong \Pp^{N-1}$.

Recall that we have
\begin{align*}\label{eqn:rel}
\bM^{\rel}((P\times E)/\GM,(D_0\times E)/\GM) &\cong (\bM^{\rel}(P,D_0) \times E)/\GM.\\
\bM^{\text{orb}}((P_{D_0,r}\times E)/\GM) &\cong (\bM^{\text{orb}}(P_{D_0,r}) \times E)/\GM.
\end{align*}
For suitable $N$, \eqref{eqn:eqchow} identifies the equivariant Chow group with a non-equivariant model. Equivariant virtual class 
\[[\bM^{\rel}((P\times E)/\GM,(D_0\times E)/\GM)]^{\vir, \eq}\] is defined using equivariant perfect obstruction theory and doing intersection theory equivariantly. But equivariant procedure eventually reduces to a finite model like this. By comparing definitions side-by-side, one can check that $[\bM^{\rel}((P\times E)/\GM,(D_0\times E)/\GM)]^{\vir_\pi}$ is identified with $[\bM^{\rel}((P\times E)/\GM,(D_0\times E)/\GM)]^{\vir, \eq}$ under \eqref{eqn:eqchow}. Thus, Lemma \ref{lem:appfamily} implies Lemma \ref{lem:appeq}.

\subsection{Identifying localization residues}
Lemma \ref{lem:appeq} implies Theorem \ref{thm:appdx} by comparing localization residues.

Consider the localization residue of $\bM^{\rel}(P,D_0)$ corresponding to a vertex of class $\beta$ over $D_0$ with $\rho_\infty$ edges going with degrees $\nu_1,\ldots,\nu_{\rho_\infty}$. We also put $n$ interior markings and $\rho_0$ relative markings with contact order $(\mu_1,\ldots,\mu_{\rho_0})$. The residue is
\[
\dfrac{\text{Edge}}{t-\Psi_\infty} \cap [\bM_{\Gamma}^\sim(D)]^{\vir}.
\]
where $\text{Edge}$ stands for the edge contribution, and $t$ is the equivariant parameter. Similarly, for the localization residue of $\bM^{\text{orb}}(P_{D_0,r})$, consider the graph with one vertex over $\cD_0$ with corresponding markings and edges. Its localization residue is the following:
\[
\text{Edge}\left(\sum_{i\geq 0} c_j(-R^*\pi_*\mathcal L_r)(t/r)^{g-1-j}\right)\prod\limits_{i=1}^{\rho_\infty}\dfrac{1}{1+\dfrac{\ev_i^*c_1(L)-\nu_i\bar\psi_i}{t}}  \cap [\bM_{g,\vec{a}}(\cD_0,\beta)]^{\vir}.
\]
Note that the edge contributions in the orbifold case are the same as the ones in the relative case. Now push both localization residues forward to the corresponding fixed component of $\bM_{g,n+\rho}(P,\beta)$ ($\rho=\rho_0$). Edge contributions are pullback classes from $\bM_{g,n+\rho_0+\rho_\infty}(D,\beta)$ and can thus be cancelled. The result is precisely Theorem \ref{thm:appdx}.

\section{Cycle relations for the moduli space of stable maps}\label{sec:cyc-rel}
In this section, we derive some relations of cycle classes in the moduli of stable maps using Theorem \ref{thm:appdx}.
The cycle relations that we are obtaining are related with double ramification cycles with target varieties. A nice introduction to double ramification cycles can be found in \cite{JPPZ}*{Section 0}.

Double ramification cycles (without target varieties) are computed in \cite{JPPZ}.
Double ramification cycles with target varities have been studied in \cite{JPPZ18}. Let $D$ be a smooth projective variety. Let $L\rightarrow D$ be a line bundle. Then we have a $\mathbb P^1$-bundle over $D$:
\[
\mathbb P(\mathcal O_D\oplus L)\rightarrow D.
\]
Recall that $\bM_{\Gamma}^\sim(D)$ is the moduli of relative stable maps to rubber
targets over $D$. 
The double ramification data is a vector $\vec a=(a_1,\ldots,a_{\rho_0+\rho_\infty+n})\in \mathbb Z^{\rho_0+\rho_\infty+n}$ of $\rho_0+\rho_\infty+n$ integers such that
\[
\sum_{i=1}^{\rho_0+\rho_\infty+n} a_i=\int_\beta c_1(L).
\]
The double ramification cycle with the target variety $D$ is defined as the pushforward
\[
\DR_\Gamma(D,L):=\tau_*[\bM_{\Gamma}^\sim(D)]^{\operatorname{vir}}\in A_{\on{vdim}-g}(\bM_{g,\rho_0+\rho_\infty+n,\beta}(D)),
\]
where
\[
\on{vdim}=(1-g)(\dim_{\mathbb C} D-3)+\int_\beta c_1(D)+\rho_0+\rho_\infty+n.
\]

Double ramification cycles computed in \cite{JPPZ} and \cite{JPPZ18} are related with tautological classes $P_{\Gamma}^{d,r}(D,L)$ on $\bM_{g,\rho_0+\rho_\infty+n,\beta}(D)$. We briefly describe $P_{\Gamma}^{d,r}(D,L)$ in here. 

Following \cite{JPPZ18}*{Section 0.3}, we consider the $D$-valued stable graphs. 

\begin{defn}
The set $G_{g,m,\beta}(D)$ of $D$-valued stable graphs consists of graphs $\Phi\in G_{g,m,\beta}(D)$ with the data
\begin{align*}
\left\{V(\Phi), H(\Phi), L(\Phi),\g: V(\Phi)\rightarrow \mathbb Z_{\geq 0}, v: H(\Phi)\rightarrow V(\Phi),\right. \\
\left.\iota: H(\Phi)\rightarrow H(\Phi), \beta: V(\Phi)\rightarrow H_2(D,\mathbb Z)\right\}
\end{align*}
satisfying the following properties:
\begin{enumerate}
    \item $V(\Phi)$ is the set of vertices. The set $V(\Phi)$ is equipped with a genus function $\g: V(\Phi)\rightarrow \mathbb Z_{\geq 0}$.
    \item $E(\Phi)$ is the set of edges. It is defined by the $2$-cycles of $\iota$ in $H(\Phi)$. Note that we allow self-edges at vertices.
    \item $L(\Phi)$ is the set of legs. It is defined by the fixed points of $\iota$. The set $L(\Phi)$ corresponds to the set of $m$ markings.
    \item The genus condition: the pair $(V(\Phi),E(\Phi))$ defines a connected graph satisfying
    \[
    \sum_{v\in V(\Phi)}\g(v)+h^1(\Phi)=g.
    \]
    \item The stability condition: for each vertex $v$ with $\beta(v)=0$, we have
    \[
    2\g(v)-2+m(v)> 0,
    \]
    where $m(v)$ is the valence of $\Phi$ at $v$ including both edges and legs.
    \item The degree condition: 
    \[
    \sum_{v\in V(\Phi)}\beta(v)=\beta.
    \]
\end{enumerate}
\end{defn}
A $D$-valued stable graph $\Phi$ determines a moduli space $\bM_\Phi$ of stable maps. There is a canonical map
\[
j_\Phi: \bM_\Phi\rightarrow \bM_{g,m,\beta}(D).
\]

There are some tautological classes on $\bM_{g,k,\beta}(D)$ obtained from the universal curve
\[
\pi: \mathcal C_{g,m,\beta}(D)\rightarrow \bM_{g,m,\beta}(D). 
\]
Let $s_i$ be the $i$th section of the universal curve and 
\[
f:\mathcal C_{g,m,\beta}(X)\rightarrow X
\]
be the universal map.
Let $\omega_\pi$ be the relative dualizing sheaf and $\omega_{\on{log}}$ be the relative logarithmic line bundle. We have the following classes
\[
\xi_i=c_1(s_i^*f^*L), \quad \eta_{a,b}=\pi_*(c_1(\omega_{\on{log}})^a\xi^b),
\]
where $\xi=c_1(f^*L)$.
\begin{defn}
A weighting mod $r$ of $\Phi$ is a function on the set of half-edges,
\[
w: H(\Phi)\rightarrow \{0,1,\ldots, r-1\},
\]
satisfying the following properties 
\begin{enumerate}
    \item for each $i\in L(\Phi)$, corresponding to the marking $i\in\{1,\ldots, m\}$,
    \[
    w(i)=a_i \mod r,
    \]
    \item for each $e\in E(\Phi)$, corresponding to two half-edges $h, h^\prime\in H(\Phi)$,
    \[
    w(h)+w(h^\prime)=0 \mod r,
    \]
    \item for each $v \in V(\Phi)$, 
    \[
    \sum_{v(h)=v}w(h)=\int_{\beta(v)}c_1(L), \mod r.
    \]
\end{enumerate}
\end{defn}
We denote by $W_{\Phi,r}$ the finite set of all possible weightings of $\Phi$ mod $r$ . Now we are ready to define the class $P_{\Gamma}^{d,r}(D,L)$ following \cite{JPPZ18}*{Section 0.6}. The class $P_{\Gamma}^{d,r}(D,L)$ is the degree $d$ component of the tautological class
\begin{align*}
    \sum_{\Phi\in G_{g,m,\beta}(D),w\in W_{\Phi,r}}\frac{r^{-h^1(\Phi)}}{|\on{Aut}(\Phi)|}j_{\Phi *}\left[ \prod_{i=1}^m\exp\left(\frac 12 a_i^2\psi_i+a_i\xi_i\right)\prod_{v\in V(\Phi)}\exp \left(-\frac 12\eta_{0,2}(v)\right)\right.\\
\left. \prod_{e=(h,h^\prime)\in E(\Phi)} \frac{1-\exp\left(-\frac{w(h)w(h^\prime)}{2}(\psi_h+\psi_{h^\prime})\right)}{\psi_h+\psi_{h^\prime}}   \right]
\end{align*}

By \cite{JPPZ18}*{Proposition 1}, the class $P_{\Gamma}^{d,r}(D,L)$ is a polynomial in $r$ for all sufficiently large $r$. The constant term of the polynomial is denoted by $P_{\Gamma}^{d}(D,L)$. \cite{JPPZ18}*{Theorem 2} states that the double ramification cycle exactly equals to class $P_{\Gamma}^{g}(D,L)$. 


For the constant term of $P_{\Gamma}^{d,r}(D,L)$, we have the following proposition.
\begin{prop}[\cite{JPPZ18}]
The two cycle classes \[r^{2d-2g+1}(\tau_1)_*\left(c_d(-R^*\pi_*\mathcal L_r)\cap [\bM_{g,\vec{a}}(\sqrt[r]{D/L},\beta)]^{vir}\right)\]
and $P_{\Gamma}^{d,r}(D,L)$ have the same constant term.
\end{prop}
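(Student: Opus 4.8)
The statement is essentially \cite{JPPZ18}, and the plan is to derive it from the Grothendieck--Riemann--Roch analysis carried out there. Write $m=n+\rho_0+\rho_\infty$, so that $\tau_1\colon \bM_{g,\vec a}(\sqrt[r]{D/L},\beta)\to\bM_{g,m}(D,\beta)$. The one structural input I would isolate first is the compatibility of virtual classes under the $r$th root construction --- the same fact already used above in the form of \cite{AJT}*{Theorem 6.8} --- namely
\[
(\tau_1)_*\left([\bM_{g,\vec a}(\sqrt[r]{D/L},\beta)]^{\on{vir}}\right)=r^{2g-1}\,[\bM_{g,m}(D,\beta)]^{\on{vir}}.
\]
Capping with $[\bM_{g,m}(D,\beta)]^{\on{vir}}$, this turns the claim into an identity among tautological classes on $\bM_{g,m}(D,\beta)$ and, crucially, explains one of the powers of $r$ in the normalization.

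Next I would compute $\on{ch}(-R^*\pi_*\mathcal L_r)$ --- hence, via Newton's identities, the Chern class $c_d(-R^*\pi_*\mathcal L_r)$ --- by applying Grothendieck--Riemann--Roch to the universal twisted curve $\pi\colon\mathcal C\to\bM_{g,\vec a}(\sqrt[r]{D/L},\beta)$ and the line bundle $\mathcal L_r=f^*L_r$, in the form valid for such families (as in \cite{JPPZ}, \cite{JPPZ18}). Decomposing the pushforward along the normalization of $\mathcal C$ produces exactly the three families of contributions occurring in the definition of $P_\Gamma^{d,r}(D,L)$, and I would match them one by one: the smooth locus of $\mathcal C$ produces the classes $\eta_{a,b}$, in particular the vertex factors $\exp(-\tfrac12\eta_{0,2}(v))$; the $i$th section, where $\mathcal L_r$ has age $a_i/r$, produces $\exp(\tfrac12 a_i^2\psi_i+a_i\xi_i)$ once the Bernoulli/Hurwitz-$\zeta$ coefficients are expanded in $r$; and each node, whose two branches carry ages $w(h)/r$ and $w(h')/r$ with $w(h)+w(h')\equiv 0\pmod r$, produces the edge factor $\dfrac{1-\exp\left(-\tfrac{w(h)w(h')}{2}(\psi_h+\psi_{h'})\right)}{\psi_h+\psi_{h'}}$. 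Summing over $D$-valued stable graphs $\Phi\in G_{g,m,\beta}(D)$ and over weightings $w\in W_{\Phi,r}$ then reproduces the prefactor $r^{-h^1(\Phi)}/|\!\on{Aut}(\Phi)|$, the exponent $-h^1(\Phi)$ coming from the number of independent mod-$r$ weightings on the edges of $\Phi$.

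Finally I would take the degree $d$ component, cap with the virtual class, push forward along $\tau_1$, and substitute the displayed compatibility. The accumulated powers of $r$ --- $r^{2g-1}$ from the virtual class, the normalization factor $r^{2d-2g+1}$, and $r^{-h^1(\Phi)}$ per stratum --- then show that $r^{2d-2g+1}(\tau_1)_*\!\left(c_d(-R^*\pi_*\mathcal L_r)\cap[\bM_{g,\vec a}(\sqrt[r]{D/L},\beta)]^{\on{vir}}\right)$ agrees, as a polynomial in $r$, with the sum over $(\Phi,w)$ defining $P_\Gamma^{d,r}(D,L)$ up to terms divisible by a positive power of $r$; taking $r^0$-coefficients on both sides yields the proposition. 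The hard part is the twisted Grothendieck--Riemann--Roch computation itself: one must keep careful track of the contributions at the orbifold markings and nodes (the age data and the resulting Bernoulli-polynomial corrections) and verify that, after the prescribed rescaling by powers of $r$, precisely the polynomial-in-$r$ part survives and matches the combinatorial $P$-formula factor by factor. Since this computation is exactly the content of \cite{JPPZ18}, I would invoke it there for the remaining details.
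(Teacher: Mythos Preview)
The paper does not give its own proof of this proposition; it simply records the statement with the attribution \cite{JPPZ18}. Your plan correctly identifies and sketches the argument that underlies \cite{JPPZ18}: the compatibility of virtual classes under $\tau_1$ (\cite{AJT}*{Theorem 6.8}) together with the orbifold Grothendieck--Riemann--Roch computation of $c_d(-R^*\pi_*\mathcal L_r)$, whose marking, node, and smooth-locus contributions assemble into the factors appearing in $P_\Gamma^{d,r}(D,L)$. This is exactly the route taken in \cite{JPPZ18}, so your proposal is in line with what the paper defers to.
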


Therefore, Corollary \ref{cor:cycrel} implies the following vanishing theorem which is also proved in \cite{CJ} and \cite{Bae19}.

\begin{thm}[\cite{CJ}*{Theorem 1.2}, \cite{Bae19}*{Theorem 3.4}]
For $d>g$, we have
\[
P_{\Gamma}^{d}(D,L)=0.
\]
\end{thm}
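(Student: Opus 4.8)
The plan is to deduce the vanishing $P_\Gamma^d(D,L)=0$ for $d>g$ directly from the refined polynomiality in Corollary \ref{cor:cycrel} together with the Proposition identifying constant terms. The key observation is that Corollary \ref{cor:cycrel} asserts that, for $r\gg 1$, the cycle
\[
(\tau_1)_*\left(r^{d-g+1}c_d(-R^*\pi_*\mathcal L_r)\cap [\bM_{g,\vec a}(\cD_0,\beta)]^{\on{vir}}\right)
\]
is a polynomial in $r$, i.e. has no negative powers of $r$. On the other hand, the Proposition quoted just above tells us that $P_\Gamma^{d,r}(D,L)$ and
\[
r^{2d-2g+1}(\tau_1)_*\left(c_d(-R^*\pi_*\mathcal L_r)\cap [\bM_{g,\vec a}(\sqrt[r]{D/L},\beta)]^{\on{vir}}\right)
\]
have the same constant term as polynomials in $r$. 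Comparing the two normalizations, the latter cycle equals $r^{d-g}$ times the cycle appearing in Corollary \ref{cor:cycrel}. So the constant term of $P_\Gamma^{d,r}(D,L)$ is the constant term of $r^{d-g}$ times a polynomial in $r$ with no negative powers of $r$; when $d>g$ this is $r$ to a strictly positive power times a polynomial, hence its $r^0$-coefficient vanishes.

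Concretely, I would proceed as follows. First, record that $P_\Gamma^{d,r}(D,L)$ is a polynomial in $r$ for $r\gg 1$ (Proposition 1 of \cite{JPPZ18}) so that ``constant term'' $P_\Gamma^d(D,L)$ makes sense, and that by the Proposition above it agrees with the $r^0$-coefficient of $r^{2d-2g+1}(\tau_1)_*\big(c_d(-R^*\pi_*\mathcal L_r)\cap[\bM_{g,\vec a}(\sqrt[r]{D/L},\beta)]^{\on{vir}}\big)$. Second, rewrite this as
\[
r^{2d-2g+1}(\tau_1)_*\big(c_d(\cdots)\cap[\cdots]^{\on{vir}}\big)
= r^{\,d-g}\cdot\Big(r^{\,d-g+1}(\tau_1)_*\big(c_d(\cdots)\cap[\cdots]^{\on{vir}}\big)\Big),
\]
where the parenthesized factor on the right is exactly the cycle in Corollary \ref{cor:cycrel} (with $\cD_0\cong\sqrt[r]{D/L}$). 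Third, invoke Corollary \ref{cor:cycrel}: the parenthesized factor is a polynomial in $r$, i.e. lies in $A_*(\bM_{g,m,\beta}(D))[r]$. Fourth, conclude that for $d>g$ the whole expression is $r^{d-g}$ (a positive power) times a polynomial in $r$, and therefore has vanishing $r^0$-coefficient. Hence $P_\Gamma^d(D,L)=0$ for $d>g$.

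There is essentially no remaining obstacle beyond bookkeeping: the substantive content has been packaged into Theorem \ref{thm:appdx} and its Corollary \ref{cor:cycrel}, so what is left is purely a matching of normalizations ($r^{2d-2g+1}$ versus $r^{d-g+1}$) and the elementary remark that a positive power of $r$ times a polynomial has no constant term. The one point that deserves care is checking that the Proposition's identification of constant terms is compatible with capping against $[\bM_{g,\vec a}(\sqrt[r]{D/L},\beta)]^{\on{vir}}$ in the way Corollary \ref{cor:cycrel} is stated — that is, that the cycle in the Proposition and the cycle in the Corollary differ by exactly the factor $r^{d-g}$ and nothing else — but this is immediate from the definitions. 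I would also remark, as the excerpt already does, that the complementary statement \eqref{eqn:appvanish} shows the $i<g$ summands contribute nothing, which is the ``dual'' face of the same phenomenon; only the $d=g$ term survives in the constant term, recovering $\DR_\Gamma(D,L)=P_\Gamma^g(D,L)$.
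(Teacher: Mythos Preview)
Your proposal is correct and is exactly the argument the paper has in mind: the paper simply states that Corollary \ref{cor:cycrel} implies the vanishing, and you have spelled out the one-line bookkeeping (matching $r^{2d-2g+1}=r^{d-g}\cdot r^{d-g+1}$ and using the Proposition to identify constant terms) that makes this implication explicit.
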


The refined polynomiality in Corollary \ref{cor:cycrel} suggests that further vanishing results might also be true for $P_{\Gamma}^{d,r}(D,L)$ when $d>g$.



\bibliographystyle{amsxport}
\bibliography{universal-BIB}
\end{document}